\newtheorem{theorem}{Theorem}[section]
\newtheorem{definition}[theorem]{Definition}
\newtheorem{lemma}[theorem]{Lemma}
\newtheorem{proposition}[theorem]{Proposition}
\newtheorem{example}[theorem]{Example}
\newtheorem{algorithm}[theorem]{Algorithm}
\newcommand{\Om}{{\bf\Omega}}
\newcommand{\BS}{{\bf S}}
\newcommand{\ii}{\textnormal{\bf i}}
\newcommand{\re}{\textnormal{Re}\,}
\newcommand{\im}{\textnormal{Im}\,}
\newcommand{\C}{\mathbb{C}}
\newcommand{\R}{\mathbb{R}}
\newcommand{\F}{\mathcal{F}}
\newcommand{\G}{\mathcal{G}}
\newcommand{\HI}{\mathcal{H}}
\newcommand{\X}{\mathcal{X}}
\newcommand{\ba}{\boldsymbol{a}}
\newcommand{\bx}{\boldsymbol{x}}
\newcommand{\by}{\boldsymbol{y}}
\newcommand{\bz}{\boldsymbol{z}}
\newcommand{\bu}{\boldsymbol{u}}
\newcommand{\bv}{\boldsymbol{v}}
\newcommand{\bxi}{\boldsymbol{\xi}}
\newcommand{\ov}{\overline}
\newcommand{\T}{\textnormal{T}}
\newcommand{\st}{\textnormal{s.t.}}
\newcommand{\ex}{\textnormal{E}\,}
\newcommand{\conv}{\textnormal{conv}\,}
\newcommand{\Prob}{\textnormal{Prob}\,}
\begin{document}

\title{Approximation Algorithms for Optimization of Real-Valued General Conjugate Complex Forms}

\author{
Taoran FU
\thanks{School of Mathematical Sciences, Shanghai Jiao Tong University, Shanghai 200240, China. Email: \mbox{taoran30@sjtu.edu.cn}.}
    \and
Bo JIANG
\thanks{Research Center for Management Science and Data Analytics, School of Information Management and Engineering, Shanghai University of Finance and Economics, Shanghai 200433, China. Email: \mbox{isyebojiang@gmail.com}.}
    \and
Zhening LI
\thanks{Department of Mathematics, University of Portsmouth, Portsmouth PO1 3HF, United Kingdom. Email: \mbox{zheningli@gmail.com}.}
}

\date{\today}

\maketitle

\begin{abstract}

Complex polynomial optimization has recently gained more and more attention in both theory and practice. In this paper, we study the optimization of a real-valued general conjugate complex form over various popular constraint sets including the $m$-th roots of complex unity, the complex unit circle, and the complex unit sphere. A real-valued general conjugate complex form is a homogenous polynomial function of complex variables as well as their conjugates, and always takes real values. General conjugate form optimization is a wide class of complex polynomial optimization models, which include many homogenous polynomial optimization in the real domain with either discrete or continuous variables, and Hermitian quadratic form optimization as well as its higher degree extensions. All the problems under consideration are NP-hard in general and we focus on polynomial-time approximation algorithms with worst-case performance ratios. These approximation ratios improve previous results when restricting our problems to some special classes of complex polynomial optimization, and improve or equate previous results when restricting our problems to some special classes of polynomial optimization in the real domain. These algorithms are based on tensor relaxation and random sampling. Our novel technical contributions are to establish the first set of probability lower bounds for random sampling over the $m$-th root of unity, the complex unit circle, and the complex unit sphere, and propose the first polarization formula linking general conjugate forms and complex multilinear forms.

\vspace{0.25cm}

\noindent {\bf Keywords:} general conjugate form, complex polynomial optimization, approximation algorithm, complex tensor, tensor relaxation, random sampling, probability bound.

\vspace{0.25cm}

\noindent {\bf Mathematics Subject Classification:} 90C59, 90C26, 90C10, 15A69, 60E15.

\end{abstract}

\section{Introduction}\label{sec:introduction}

Polynomial optimization has been one of the main research topics in mathematical optimization in the past decade due to its sophisticated theory in semi-algebraic geometry~\cite{L09}, its algorithmic effects in both continuous optimization and discrete optimization~\cite{AL12}, as well as its enormous applications such as biomedical engineering, control theory, graph theory, investment science, material science, quantum mechanics, signal processing, speech recognition~\cite{LHZ12}. Most research emphasis has been put on polynomial optimization in the domain of real numbers. In recent years, motivated by a large number of applications, especially in signal processing, various types of polynomial optimization models in the complex domain were proposed and studied. Aittomaki and Koivunen~\cite{AK09} formulated the beampattern optimization problem as a complex multivariate quartic minimization problem. Chen et al.~\cite{CV09} considered the joint optimization problem of waveforms and receiving filters in multiple-input and multiple-output radar, and relaxed it to a quartic complex polynomial optimization model. Hilling and Sudbery~\cite{HS10} constructed a complex polynomial optimization with the spherical constraint in the area of quantum entanglement. Aubry et al.~\cite{AMJZ13} introduced a cognitive approach to design a special class of waveforms by optimizing a complex quartic polynomial with a constant modulus constraint. Very recently, the application of complex polynomial optimization to electricity transmission networks was discovered and investigated by Josz~\cite{J16}, and its application in power system state estimation was studied by Madani et al.~\cite{MLB16}.

On the algorithmic aspect, the traditional sum-of-squares method by Lasserre~\cite{L01} for general polynomial optimization problems has been extended to complex polynomial optimizations; see e.g.,~\cite{DW12,JM15}. Since polynomial optimization problems are generally NP-hard, various polynomial-time approximation algorithms have been proposed for solving certain classes of high-degree polynomial optimization models---a summary of research can be found in the monograph of Li et al.~\cite{LHZ12}. Improvements on approximation ratios of these polynomial optimization models have been recently made by He et al.~\cite{HJLZ14} and Hou and So~\cite{HS14}. In the context of complex polynomial optimization, approximation algorithms are mostly proposed for the quadratic models. Complex quadratic form optimization under the $m$-th roots of unity constraints and the complex unit circle constraints have been studied in~\cite{SZY07,ZH06}. Huang and Zhang~\cite{HZ10} also discussed bilinear complex polynomial optimization models. Beyond quadratics, Jiang et al.~\cite{JLZ14} studied approximation algorithms for various high-degree complex polynomial optimization under the $m$-th roots of unity constraints, the complex unit circle constraints, and complex spherical constraints. 

In almost all the complex optimization models mentioned above, the objective function to be optimized is the real part of a complex polynomial function rather than the function itself since it is not real-valued. Very recently, Jiang et al.~\cite{JLZ16} provided a necessary and sufficient condition under which complex polynomials always take real values. Based on this condition, they proposed a very wide class of real-valued complex polynomial functions, called {\em general conjugate forms}, which include all the complex objective functions studied in~\cite{SZY07,ZH06,HZ10,JLZ14} as special cases, as well as all homogeneous polynomial functions in the real domain. In this paper, we are primarily interested in the real-valued general conjugate form optimization under various popular constraints in complex variables, such as the $m$-th roots of unity, the complex unit circle, and the complex spherical constraints. The emphasis is to propose polynomial-time approximation algorithms and analyze their performances. Originated from previous researches in probability estimation of random sampling~\cite{KN08,HJLZ14}, tensor relaxation and polarization formula~\cite{HLZ10} and feasible solution reconstruction~\cite{JLZ14}, we develop some new techniques and results on that line, which enable us to study a new and much more general class of complex polynomial optimization models that covers and improves many existing researches in the literature. The main contributions of the paper are as follows:
\begin{itemize}
  \itemsep0pt
  \item We propose the first polarization formula relating general conjugate forms and complex multilinear forms;
  \item We study random sampling over the $m$-th root of unity, the complex unit circle and the complex sphere, and provide some first probability lower bounds;
  \item We propose new approximation algorithms for complex multilinear form optimization over various complex constraints, whose approximation ratios improve and generalize that of~\cite{HZ10,JLZ14};
  \item We propose some first approximation algorithms for real-valued general conjugate form optimization over various complex constraints, whose approximation ratios improve that of~\cite{ZH06,SZY07,JLZ14} when restricting to some special classes of complex polynomial optimization, and improve or equate that of~\cite{HLZ10,S11,ZQY12,ZCTW12,HLZ13,HJLZ14} when restricting to some special classes of polynomial optimization in the real domain.
\end{itemize}

This paper is organized as follows. We start with preparations of various notations, definitions of various complex functions and complex optimization models in Section~\ref{sec:prepare}. In Section~\ref{sec:link}, we present a polarization formula that links general conjugate forms and complex multilinear forms, paving a way to study general conjugate form optimization via complex multilinear form optimization. In Section~\ref{sec:key}, we discuss some key probability bounds for random sampling over the $m$-th roots of unity, the complex unit circle and the complex sphere, a fundamental step in deriving improved approximation algorithms for complex multilinear form optimization. Polynomial-time approximation algorithms with improved approximation ratios for complex multilinear form optimization over various types of constraint sets are proposed and analyzed in Section~\ref{sec:mulform}. Finally, by applying the linkage between general conjugate forms and complex multilinear forms, approximation algorithms with guaranteed worst-case performance ratios for general conjugate form optimization under various constraints are discussed in Section~\ref{sec:gform}.

\section{Preparations}\label{sec:prepare}

Throughout this paper we use usual lowercase letters, boldface lowercase letters, capital letters, and calligraphic letters to denote scalars, vectors, matrices, and tensors, respectively. For example, a scalar $x$, a vector $\bx$, a matrix $X$, and a tensor $\X$. We use subscripts to denote their components, e.g., $x_i$ being the $i$-th entry of a vector $\bx$, $X_{ij}$ being the $(i,j)$-th entry of a matrix $X$, and $\X_{ijk}$ being the $(i,j,k)$-th entry of a third order tensor $\X$. As usual, the field of real numbers and the field of complex numbers are denoted by $\R$ and $\C$, respectively.

For any complex number $z=a+\ii b\in\C$ with $a,b\in\R$, its real part and imaginary part are denoted by $\re z:=a$ and $\im z:=b$, respectively. Its modulus is denoted by $|z|:=\sqrt{\ov{z}z}=\sqrt{a^2+b^2}$, where $\ov{z}:=a-\ii b$ denotes the conjugate of $z$. 
The $L_p$-norm ($1\le p\le \infty$) of a complex vector $\bx\in\C^n$ is defined as $\|\bx\|_p:=\left(\sum_{i=1}^n |x_i|^p\right)^{\frac{1}{p}}$.

\subsection{Complex multilinear forms and homogenous complex polynomials}

Given a $d$-th order complex tensor $\F=(\F_{i_1i_2\dots i_d})\in\C^{n_1\times n_2\times \dots \times n_d}$, its associated complex multilinear form $F$ is defined as
$$
  F(\bx^1,\bx^2,\dots,\bx^d):= \sum_{i_1=1}^{n_1}\sum_{i_2=1}^{n_2}\dots\sum_{i_d=1}^{n_d} \F_{i_1i_2\dots i_d}\,x^1_{i_1}x^2_{i_2}\dots x^d_{i_d},
$$
where the variables $\bx^k\in\C^{n_k}$ for $k=1,2,\dots,d$. Closely related to a multilinear form is a homogeneous complex polynomial function, or explicitly
$$
  f(\bx):= \sum_{1\le i_1 \le i_2 \le \dots \le i_d\le n} a_{i_1i_2\dots i_d}x_{i_1}x_{i_2}\dots x_{i_d},
$$
where the variable $\bx\in\C^n$. Associated with any homogeneous complex polynomial is a symmetric complex tensor $\F\in\C^{n^d}$, i.e., its entries $\F_{i_1i_2\dots i_d}$'s are invariant under permutations of their indices $\{i_1,i_2,\dots,i_d\}$. In this sense,
$$
\F_{i_1i_2\dots i_d}=\frac{a_{i_1i_2\dots i_d}}{|\Pi(i_1i_2\dots i_d)|} \quad\forall\, 1\le i_1\le i_2\le\dots\le i_d\le n,
$$
where $\Pi(i_1i_2\dots i_d)$ is the set of all distinct permutations of the indices $\{i_1,i_2,\dots, i_d\}$. In light of a multilinear form $F$ associated with a symmetric tensor $\F$, homogeneous polynomial $f(\bx)$ is obtained by letting $\bx^1=\bx^2=\dots=\bx^d=\bx$, i.e., $f(\bx)=F(\underbrace{\bx,\bx,\dots,\bx}_d)$. We call such an $\F$ to be the tensor representation of the homogeneous complex polynomial $f(\bx)$.

\subsection{General conjugate forms and their tensor representations}

A multivariate {\em conjugate} complex polynomial $c(\bx)$ is a polynomial function of variables $\bx,\ov{\bx}\in\C^n$. Without conjugate terms, real-valued complex polynomial functions are meaningless as otherwise they become constant functions. Restricting to homogeneous ones, Jiang et al.~\cite{JLZ16} proposed general conjugate forms.
\begin{definition}[General conjugate form~\cite{JLZ16}] \label{def:gform}
A general conjugate form of the variable $\bx\in\C^n$ is defined as
\begin{equation}\label{eq:gform}
g(\bx)=\sum_{k=0}^d \, \sum_{1\le i_1\le i_2\le \dots \le i_k \le n}\, \sum_{1\le j_1 \le j_2\le \dots \le j_{d-k} \le n} a_{i_1i_2\dots i_k,j_1j_2\dots j_{d-k}}\ov{x_{i_1}x_{i_2}\dots x_{i_k}}x_{j_1} x_{j_2}\dots x_{j_{d-k}}.
\end{equation}
\end{definition}
Essentially, it is the summation of all the possible $d$-th degree monomials, allowing any number of conjugate variables as well as usual variables in each monomial. This, however, does not require the number of conjugate variables being the same as the number of usual variables in any monomial, a special type of general conjugate forms called symmetric conjugate forms defined in~\cite{JLZ16}. Jiang et al.~\cite{JLZ16} proved that a general conjugate form taking real values for all $\bx\in\C^n$ if and only if the coefficients of each pair of conjugate monomials are conjugate to each other, i.e., $a_{i_1i_2\dots i_k,j_1j_2\dots j_{d-k}}=\ov{a_{j_1j_2\dots j_{d-k},i_1i_2\dots i_k}}$ in~\eqref{eq:gform}. It worth mentioning that restricting to the quadratic case, real-valued general conjugate forms include Hermitian quadratic forms (which are also real-valued) as a subclass since the latter one requires exact one conjugate variable and one usual variable in any monomial.

The tensor representation for a real-valued general conjugate form is interesting, which is explicitly characterized as follows.
\begin{definition}[Conjugate super-symmetric tensor~\cite{JLZ16}] \label{def:css}
An even dimensional tensor $\G\in\C^{(2n)^d}$ is called conjugate super-symmetric if \\
(i) $\G$ is symmetric, i.e., $\G_{i_1i_2\dots i_d} = \G_{j_1j_2\dots j_d}$ for all $(j_1j_2\dots j_d) \in \Pi(i_1i_2\dots i_d)$,
and \\
(ii) $\G_{i_1i_2\dots i_d} = \ov{\G_{j_1j_2\dots j_d }}$ holds for all $1\le i_1, i_2, \dots, i_d, j_1, j_2, \dots, j_d\le 2n$ with $|i_k -j_k| = n$ for $k=1,2,\dots,d$.
\end{definition}
There is one-to-one correspondence between $n$-dimensional $d$-th degree real-valued general conjugate forms and $2n$-dimensional $d$-th order conjugate super-symmetric tensors~\cite{JLZ16}. In particular, for any conjugate super-symmetric $\G\in\C^{(2n)^d}$, the corresponding real-valued general conjugate form can be obtained by
\begin{equation}\label{eq:gform2}
  g(\bx)=G\bigg(\underbrace{\binom{\ov\bx}{\bx},\binom{\ov\bx}{\bx},\dots,\binom{\ov\bx}{\bx}}_d\bigg).
\end{equation}
A simple example of a complex quadratic polynomial (matrix case) is shown below.
\begin{example}
  Given a conjugate super-symmetric second order tensor (matrix) $G=\left( \begin{smallmatrix} \ii & 0 & 1 & 2\\ 0 & 0 & 2 & 0\\ 1 & 2 & -\ii & 0 \\ 2 & 0 &0 & 0 \end{smallmatrix}  \right)\in\C^{4^2}$,  the corresponding general conjugate form is $$g(\bx)=(\ov{x_1},\ov{x_2},x_1,x_2)G(\ov{x_1},\ov{x_2},x_1,x_2)^{\T}=\ii{\ov{x_1}}^2 + 2\ov{x_1}x_1 + 4\ov{x_1}x_2 + 4 \ov{x_2}x_1 - \ii {x_1}^2,$$
  which always takes real values for any $x_1,x_2\in\C$.
\end{example}

\subsection{Complex constraint sets}\label{sec:constraint}

The following commonly encountered constraint sets for complex polynomial optimization are considered in this paper:
\begin{itemize}
  \itemsep0pt
  \item The $m$-th roots of unity: $\Om_m=\left\{1,\omega_m,\dots,\omega_m^{m-1}\right\}$, where $\omega_m = e^{\ii\frac{2\pi}{m}}= \cos{\frac{2\pi}{m}} + \ii \sin {\frac{2\pi}{m}}$. Denote $\Om_m^n=\left\{\bx\in\C^n:x_i\in\Om_m,\,i=1,2,\dots,n\right\}$.
  \item The complex unit circle: $\Om_\infty=\{z \in \C:|z|=1 \}$. Denote $\Om_\infty^n=\{\bx\in\C^n:x_i\in\Om_\infty,\, i=1,2,\dots,n \}$.
  \item The complex sphere: $\BS^n = \left\{\bx\in\C^n\ : \|\bx\|_2=1\right\}.$
\end{itemize}
Throughout this paper, we assume $m\ge3$, to ensure that the decision variables being considered are essentially complex.

\subsection{Complex polynomial optimization models}\label{sec:model}

This main purpose of this paper is to study approximation algorithms for real-valued general conjugate form optimization over three types of constraint sets mentioned in Section~\ref{sec:constraint}. Specifically, given a real-valued general conjugate form $g(\bx)$ associated with a conjugate super-symmetric tensor $\G$, we study the following optimization models,
$$
\begin{array}{lll}
(G_m) & \max & g(\bx) \\
             & \st & \bx \in \Om_m^n; \\
(G_\infty) & \max  & g(\bx) \\
           & \st & \bx \in \Om_\infty^n; \\
(G_S) & \max      & g(\bx) \\
      & \st & \bx \in \BS^n.
\end{array}
$$

As $g(\bx)=G\bigg(\underbrace{\binom{\ov\bx}{\bx},\binom{\ov\bx}{\bx},\dots,\binom{\ov\bx}{\bx}}_d\bigg)$ where $\bx\in\C^n$, the tensor relaxation approach~\cite{HLZ10} is applied to study these models, i.e., relaxing the objective function $g(\bx)$ to $G(\bx^1,\bx^2,\dots,\bx^d)$ where $\bx^k\in\C^{2n}$ for $k=1,2,\dots,d$. Therefore, we first study the following optimization models,
$$
\begin{array}{lll}
(L_m) & \max & \re F(\bx^1,\bx^2,\dots,\bx^d) \\
             & \st & \bx^k \in \Om_m^{n_k} ,\, k=1,2,\dots,d; \\
(L_\infty) & \max  & \re F(\bx^1,\bx^2,\dots,\bx^d) \\
           & \st & \bx^k \in \Om_\infty^{n_k} ,\, k=1,2,\dots,d; \\
(L_S) & \max      & \re F(\bx^1,\bx^2,\dots,\bx^d) \\
      & \st & \bx^k \in \BS^{n_k} ,\, k=1,2,\dots,d.
\end{array}
$$
where $F$ is a complex multilinear form associated with a complex tensor $\F\in\C^{n_1\times n_2\times \dots \times n_d}$. The real-part operator has to be put in the objective function as a complex multilinear form cannot always take real values.

\subsection{Polynomial-time approximation algorithms}

For any maximization problem $(P):\max_{\bx\in {\bf X}}p(\bx)$ studied in this paper, we denote $v_{\max}(P)$ to be the optimal value and $v_{\min}(P)$ to be the optimal value of its minimization counterpart $\min_{\bx\in {\bf X}}p(\bx)$.
\begin{definition}
(i) A maximization problem $(P):\max_{\bx\in {\bf X}}p(\bx)$ admits a polynomial-time approximation algorithm with approximation ratio $\rho\in(0,1]$, if $v_{\max}(P)\ge0$ and a feasible solution $\bz\in {\bf X}$ can be found in polynomial time, such that $p(\bz)\ge \rho\, v_{\max}(P)$.\\
(ii) A maximization problem $(P):\max_{\bx\in {\bf X}}p(\bx)$ admits a polynomial-time approximation algorithm with {\em relative} approximation ratio $\rho\in(0,1]$, if a feasible solution $\bz\in {\bf X}$ can be found in polynomial time, such that $p(\bz)-v_{\min}(P)\ge \rho\left(v_{\max}(P)-v_{\min}(P)\right)$.
\end{definition}
There is no evidence that one type of approximation ratios is better than or implies the other. Whether a usual approximation ratio is obtainable or it has to be a relative approximation ratio is really depend on the nature of the optimization model. For some problems, such as the objective function is always negative implying that $v_{\max}(P)) \le 0$, only a {\em relative} approximation ratio can be obtained.

All the optimization models considered in this paper are NP-hard in general, even restricted to the real domain. Polynomial-time randomized algorithms with worst-case approximation ratios are proposed for these models, when the degree of these complex polynomial functions, $d$, is fixed. These approximation ratios depend only on the dimensions of the problems, or in other words, they are data-independent.

\section{Polarization identity of general conjugate forms}\label{sec:link}

As mentioned in Section~\ref{sec:model}, complex multilinear form relaxations are applied to study real-valued general conjugate form optimization models. This section is devoted to establishing an identity linking these two complex polynomial functions. In the literature, such identities are usually called polarization identities. He et al.~\cite{HLZ10} first established a polarization identity linking multilinear forms to homogenous polynomials. So~\cite{S11} proposed a polarization identity for multiquadratic forms and He et al.~\cite{HLZ13} further extended such identity to mixed forms. These identities can be applied to both the real and the complex domains. In the complex domain specifically, Jiang et al.~\cite{JLZ14} established a polarization identity liking complex multilinear forms to symmetric conjugate forms, a special class of general conjugate forms. Our main result in this section is as follows.
\begin{theorem}\label{thm:link}
Let $m\ge3$ be an integer or $m=\infty$. Suppose $g(\bx)$ with $\bx\in\C^n$ is a real-valued general conjugate form associated with a conjugate super-symmetric tensor $\G\in\C^{(2n)^d}$. If $\xi_1,\xi_2,\dots, \xi_d$ are i.i.d.\ uniformly on $\Om_m$, then for any $\bx^1,\bx^2,\dots,\bx^d, \by^1,\by^2,\dots,\by^d \in \C^{n}$
$$
\ex\left[ \left(\prod_{i=1}^d \ov{\xi_i}\right)g \left( \sum_{k=1}^d\left(\ov{\xi_k\bx^k}+\xi_k\by^k\right) \right) \right]
=d!\, G\left(\binom{\bx^1}{\by^1},\binom{\bx^2}{\by^2},\dots,\binom{\bx^d}{\by^d}\right).
$$
\end{theorem}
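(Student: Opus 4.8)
The plan is to expand $g$ and the resulting multilinear form by multilinearity, then evaluate the expectation term by term using independence of $\xi_1,\dots,\xi_d$, showing that only a ``diagonal'' family of $d!$ terms survives. First I would set, for $k=1,\dots,d$, $\bu^k:=\binom{\bx^k}{\by^k}\in\C^{2n}$ and $\bv^k:=\binom{\ov{\by^k}}{\ov{\bx^k}}\in\C^{2n}$, and write $\bw:=\sum_{k=1}^d\bigl(\ov{\xi_k\bx^k}+\xi_k\by^k\bigr)\in\C^n$. A componentwise check shows that the top $n$-block of $\binom{\ov\bw}{\bw}$ is $\sum_k(\xi_k\bx^k+\ov{\xi_k}\,\ov{\by^k})$ and the bottom $n$-block is $\sum_k(\ov{\xi_k}\,\ov{\bx^k}+\xi_k\by^k)$, hence $\binom{\ov\bw}{\bw}=\sum_{k=1}^d\bigl(\xi_k\bu^k+\ov{\xi_k}\bv^k\bigr)$. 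By the defining relation~\eqref{eq:gform2}, $g(\bw)=G\bigl(\binom{\ov\bw}{\bw},\dots,\binom{\ov\bw}{\bw}\bigr)$, so expanding each of the $d$ slots by multilinearity of $G$ yields
$$
g(\bw)=\sum_{k_1,\dots,k_d=1}^d\ \sum_{S\subseteq\{1,\dots,d\}}\left(\prod_{j\in S}\xi_{k_j}\right)\left(\prod_{j\notin S}\ov{\xi_{k_j}}\right)G\bigl(\bs^{k_1},\dots,\bs^{k_d}\bigr),
$$
where $\bs^{k_j}:=\bu^{k_j}$ when $j\in S$ and $\bs^{k_j}:=\bv^{k_j}$ otherwise.

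Next I would multiply by $\prod_{i=1}^d\ov{\xi_i}$, bring the expectation inside this finite sum, and use independence. Only few terms survive. If $(k_1,\dots,k_d)$ is not a permutation of $(1,\dots,d)$, then some index $i_0$ is absent, so $\xi_{i_0}$ appears neither in $G(\bs^{k_1},\dots,\bs^{k_d})$ nor in the accompanying product of the $\xi_{k_j}$'s and $\ov{\xi_{k_j}}$'s; the term thus carries the isolated factor $\ex[\ov{\xi_{i_0}}]=0$, valid for every integer $m\ge2$ and for $m=\infty$. So assume $j\mapsto k_j$ is a bijection. Then $\prod_{j\in S}\xi_{k_j}\prod_{j\notin S}\ov{\xi_{k_j}}=\prod_{i\in k(S)}\xi_i\prod_{i\notin k(S)}\ov{\xi_i}$ with $k(S):=\{k_j:j\in S\}$, and multiplying by $\prod_{i=1}^d\ov{\xi_i}$ and using $|\xi_i|^2=1$ leaves $\prod_{i\notin k(S)}\ov{\xi_i}^{\,2}$. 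Since $\ex[\ov{\xi_i}^{\,2}]=0$ precisely when $m\ge3$ or $m=\infty$ --- this is the single spot where the hypothesis $m\ge3$ is needed --- every such term vanishes unless $S=\{1,\dots,d\}$.

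Finally, the surviving terms are exactly those with $(k_1,\dots,k_d)$ a permutation and $S=\{1,\dots,d\}$; each contributes $\ex\bigl[\prod_{i=1}^d|\xi_i|^2\bigr]\,G(\bu^{k_1},\dots,\bu^{k_d})=G(\bu^{k_1},\dots,\bu^{k_d})$, which by the symmetry of the conjugate super-symmetric tensor $\G$ equals $G(\bu^1,\dots,\bu^d)$. Summing over the $d!$ permutations gives $d!\,G(\bu^1,\dots,\bu^d)=d!\,G\bigl(\binom{\bx^1}{\by^1},\dots,\binom{\bx^d}{\by^d}\bigr)$, as claimed. The only real obstacle is the combinatorial bookkeeping of this double expansion; once it is organized, the off-diagonal terms vanish by the elementary moment identities $\ex[\ov{\xi}]=0$ and $\ex[\ov{\xi}^{\,2}]=0$ for $\xi$ uniform on $\Om_m$ with $m\ge3$ (or on $\Om_\infty$), and the symmetry of $\G$ collapses the diagonal terms into the factor $d!$. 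Note that only part~(i) of Definition~\ref{def:css} is actually used in the argument; part~(ii) is what makes $g$ real-valued to begin with.
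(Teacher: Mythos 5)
Your proof is correct and follows essentially the same route as the paper's: rewrite $\binom{\ov\bw}{\bw}$ as $\sum_k(\xi_k\bu^k+\ov{\xi_k}\bv^k)$, expand $G$ by multilinearity, and kill all off-diagonal terms via $\ex[\ov{\xi}]=0$ and $\ex[\ov{\xi}^{\,2}]=0$, leaving the $d!$ permutation terms that collapse by symmetry of $\G$. Your $(k_1,\dots,k_d,S)$ bookkeeping is just a reindexing of the paper's sum over $\{1,\dots,2d\}^d$, and your two vanishing cases correspond exactly to the paper's cases (iii) and (ii).
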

\begin{proof}
According to~\eqref{eq:gform2}, we have
\begin{align*}
&~~~~\ex\left[\left(\prod_{i=1}^d \ov{\xi_i}\right) g \left( \sum_{k=1}^d\left(\ov{\xi_k\bx^k}+\xi_k\by^k\right) \right) \right]\\
&=\ex\left[\left(\prod_{i=1}^d \ov{\xi_i}\right) G\left(
\binom {\sum_{k=1}^d(\xi_k\bx^k+\ov{\xi_k\by^k})} {\sum_{k=1}^d(\ov{\xi_k\bx^k}+\xi_k\by^k)},
\binom {\sum_{k=1}^d(\xi_k\bx^k+\ov{\xi_k\by^k})} {\sum_{k=1}^d(\ov{\xi_k\bx^k}+\xi_k\by^k)}, \dots,
\binom {\sum_{k=1}^d(\xi_k\bx^k+\ov{\xi_k\by^k})} {\sum_{k=1}^d(\ov{\xi_k\bx^k}+\xi_k\by^k)} \right)\right]\\
&=\ex\left[\left(\prod_{i=1}^d \ov{\xi_i}\right) G\left(
\sum_{k=1}^d \left(\xi_k\binom{\bx^k}{\by^k}+\ov{\xi_k}\binom{\ov{\by^k}}{\ov{\bx^k}}\right),
\sum_{k=1}^d \left(\xi_k\binom{\bx^k}{\by^k}+\ov{\xi_k}\binom{\ov{\by^k}}{\ov{\bx^k}}\right), \dots,
\sum_{k=1}^d \left(\xi_k\binom{\bx^k}{\by^k}+\ov{\xi_k}\binom{\ov{\by^k}}{\ov{\bx^k}}\right)  \right) \right] \\
&=\ex\left[\left(\prod_{i=1}^d \ov{\xi_i}\right) G\left(\sum_{k=1}^{2d}\eta_k\bz^k,\sum_{k=1}^{2d}\eta_k\bz^k, \dots, \sum_{k=1}^{2d}\eta_k\bz^k\right)\right]\\
&=\ex\left[\sum_{k_1=1}^{2d} \sum_{k_2=1}^{2d} \dots \sum_{k_d=1}^{2d} \left(\prod_{i=1}^d \ov{\xi_i}\right) \left(\prod_{j=1}^d \eta_{k_{j}}\right) G (\bz^{k_1},\bz^{k_2},\dots ,\bz^{k_d})\right],
\end{align*}
where the last equality is due to the multilinearity of $G$,
$$\bz^k:=\binom{\bx^k}{\by^k} \mbox{ for } k=1,2,\dots, d \mbox{ and } \bz^k:=\binom{\ov{\by^{k-d}}}{\ov{\bx^{k-d}}} \mbox{ for } k=d+1,d+2,\dots,2d,$$
and
$$\eta_k=\xi_k \mbox{ for } k=1,2,\dots, d \mbox{ and } \eta_k=\ov{\xi_{k-d}} \mbox{ for } k=d+1,d+2,\dots,2d.$$

Let us take a close look at $\ex\left[\left(\prod_{i=1}^d \ov{\xi_i}\right) \left(\prod_{j=1}^d \eta_{k_{j}}\right)\right]$ for all the possible $k_j$'s. Since $m\ge 3$ or $m=\infty$, it is obvious that
\begin{equation}\label{first-second-moments}
\ex\xi_i=0,~\ex\xi_i^2=0,~\ex\ov{\xi_i}^2=0,\mbox{ and }\ov{\xi_i}\xi_i=1\mbox{ for }i=1,2,\dots,d.
\end{equation}
Consider $d$ sets of index couples $\{k,d+k\}$ for $k=1,2,\dots,d$, and we discuss the distribution of $\{k_1,k_2, \dots ,k_d\}$ in these index couples via three cases.

(i) None of the two $k_j$'s belongs to the same index couple and $\max_{1\le j\le d}\{k_j\}\le d$. In this case, $(k_1,k_2, \dots ,k_d) \in \Pi(1,2,\dots ,d)$, i.e., a permutation of $\{1,2,\dots,d\}$. We have
\begin{equation*}
\ex\left[\left(\prod_{i=1}^d \ov{\xi_i}\right) \left(\prod_{j=1}^d \eta_{k_{j}}\right)\right]
=\ex \left[\left(\prod_{i=1}^d \ov{\xi_i}\right) \left(\prod_{j=1}^d \eta_j\right)\right]
=\ex \left[\left(\prod_{i=1}^d \ov{\xi_i}\right) \left(\prod_{j=1}^d \xi_j\right)\right]
=\prod_{i=1}^d \ex[\ov{\xi_i}\xi_i]=1.
\end{equation*}

(ii) None of the two $k_j$'s belongs to the same index couple and $\max_{1\le j\le d}\{k_j\}> d$. If we pick any $\ell$ with $k_\ell>d$, then $\eta_{k_\ell}=\ov{\xi_{k_\ell-d}}$ and none of the other $k_j$'s belongs to $\{k_\ell-d,k_\ell\}$. We have
\begin{equation*}
\ex\left[\left(\prod_{i=1}^d \ov{\xi_i}\right) \left(\prod_{j=1}^d \eta_{k_{j}}\right)\right]
=\ex\ov{\xi_{k_\ell-d}}^2 \,\ex\left[\left(\prod_{1\le i\le d,\,i\ne k_\ell-d}\ov{\xi_i}\right) \left(\prod_{1\le j\le d,\,j\neq \ell} \eta_{k_j}\right)\right]
=0.
\end{equation*}

(iii) There exists an $\ell~(1\le \ell\le d)$ such that none of $k_j$'s belongs to $\{\ell,\ell+d\}$. We have
\begin{equation*}
\ex\left[\left(\prod_{i=1}^d \ov{\xi_i}\right) \left(\prod_{j=1}^d \eta_{k_{j}}\right)\right] =\ex\ov{\xi_\ell} \,\ex\left[\left(\prod_{1\le i\le d,\,i\ne \ell}\ov{\xi_i}\right)\left(\prod_{j=1}^d \eta_{k_{j}}\right)\right]=0.
\end{equation*}

Therefore, $\ex\left[\left(\prod_{i=1}^d \ov{\xi_i}\right) \left(\prod_{j=1}^d \eta_{k_{j}}\right)\right]\neq 0$ if and only if $(k_1,k_2, \dots ,k_d) \in \Pi(1,2,\dots ,d)$. As the number of different permutations in $\Pi(1,2,\dots ,d)$ is $d!$, by taking into account of the symmetricity of $\G$, it follows that
\begin{align*}
\ex\left[\sum_{k_1=1}^{2d} \sum_{k_2=1}^{2d} \dots \sum_{k_d=1}^{2d} \left(\prod_{i=1}^d \ov{\xi_i}\right) \left(\prod_{j=1}^d \eta_{k_{j}}\right) G (\bz^{k_1},\bz^{k_2},\dots ,\bz^{k_d})\right]
&=d!\, G(\bz^1,\bz^2,\dots ,\bz^d)\\
&=d!\, G\left(\binom{\bx^1}{\by^1},\binom{\bx^2}{\by^2},\dots,\binom{\bx^d}{\by^d}\right),
\end{align*}
proving the identity.
\end{proof}

\section{Main probability bounds}\label{sec:key}

This section is devoted to some key probability inequalities that will be used in deriving approximation algorithms in Section~\ref{sec:mulform}. In particular, we consider the inner product between a fixed complex vector and a random complex vector, and establish a nontrivial lower bound on the event that such inner product is larger than a certain threshold. In the real domain, such inequalities are extremely useful in designing randomized approximation algorithms, see e.g.,~\cite{KN08,HJLZ14}. Although the main idea in the proof of the inequality originates from~\cite{KN08}, the case for a random complex vector has a more sophisticated structure leading to more general and useful results.

In the real domain, Khot and Naor~\cite{KN08} proved that for every $\delta\in(0,\frac{1}{2})$, there is a constant $c(\delta)>0$ such that if $\bxi\in\R^n$ whose entries are i.i.d.\ symmetric Bernoulli random variables (taking $\pm 1$ with equal probability), then for any $\ba\in\R^n$,
$$
\Prob\left\{\ba^{\T}\bxi \ge \sqrt{\frac{\delta \ln{n}}{n}}\, \| \ba\|_1 \right\} \ge \frac{c(\delta)}{n^{\delta}}.
$$
In another setting, Brieden et al.~\cite{BGKKLS98} showed that if $\bxi\in\R^n$ is drawn uniformly on the unit sphere $\{\bx\in\R^n:\|\bx\|_2=1\}$, then for any $\ba\in\R^n$,
$$ \Prob\left\{\ba^{\T}\bxi\ge\sqrt{\frac{\ln n}{n}}\|\ba\|_2\right\} \ge \frac{1}{10\sqrt{\ln n}} \left(1-\frac{\ln n}{n}\right) ^{\frac{n-1}{2}}.$$
A refinement of the above result and extensions to polynomial functions were discussed by He et al.~\cite{HJLZ14}. In the complex domain, the symmetric Bernoulli random variable obviously extends to the uniform distribution on $\Om_m$, and the unform distribution on the unit sphere extends to the uniform distribution over the complex unit sphere. Our results are presented in Theorem~\ref{thm:key1} and Theorem~\ref{thm:key2}, respectively. Before deriving these inequalities, let us first review a useful inequality.

\begin{lemma}[Berry-Esseen inequality~\cite{E56,I10}]\label{thm:be}
Let $\eta_1,\eta_2,\dots,\eta_n$ be independent real random variables with $\ex\eta_i=0$, $\ex \eta_i^2=\sigma_i^2>0$ and $\ex |\eta_{i}|^{3}=\kappa_i< \infty$ for $i=1,2,\dots,n$. Denote $s_n=\sum_{i=1}^n\eta_i\Big{/}\sqrt{\sum_{i=1}^n\sigma_i^2}$
to be the normalized $n$-th partial sum and $S_{n}$ to be the cumulative distribution function of $s_{n}$. It follows that
$$
\sup_{x\in \R}|S_{n}(x)-N(x)| \le \frac{c_0\sum_{i=1}^{n}\kappa_i}{\left(\sum_{i=1}^{n}\sigma_{i}^2\right)^{\frac{3}{2}}},
$$
where $c_0\in(0.4097, 0.56)$ is a constant and $N(t):= \int_{-\infty}^t \frac {1}{\sqrt{2\pi }} e^{-\frac{x^2}{2}} dx$ is the cumulative distribution function of the standard normal distribution.
\end{lemma}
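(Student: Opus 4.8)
The plan is to prove this by the classical characteristic-function method, since the statement is precisely the Berry--Esseen central limit theorem with an explicit Lyapunov-type rate (the result can alternatively be obtained by Stein's method at the cost of a weaker constant). Write $B_n^2:=\sum_{i=1}^n\sigma_i^2$ and let $\lambda_n:=B_n^{-3}\sum_{i=1}^n\kappa_i$ be the Lyapunov ratio on the right-hand side. Since $\sup_x|S_n(x)-N(x)|\le1$ always holds, the claimed bound is automatic whenever $c_0\lambda_n\ge1$, so we may assume $\lambda_n$ lies below a fixed absolute constant; this legitimizes the estimates below. Rescaling the $\eta_i$ --- which changes neither $S_n$ nor $\lambda_n$ --- we normalize $B_n=1$, so that $\lambda_n=\sum_i\kappa_i$ and $\sum_i\sigma_i^2=1$; moreover the power-mean inequality gives $\sigma_i^2\le\kappa_i^{2/3}\le\lambda_n^{2/3}$ and $\kappa_i\le\lambda_n$ for each $i$. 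Write $\phi_i$ for the characteristic function of $\eta_i$, $\phi:=\prod_{i=1}^n\phi_i$ for that of $s_n$, $\gamma_i(t):=e^{-\sigma_i^2t^2/2}$, and $\gamma(t):=\prod_i\gamma_i(t)=e^{-t^2/2}$.

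The first step is Esseen's smoothing inequality: for every $T>0$,
\[
\sup_{x\in\R}|S_n(x)-N(x)|\ \le\ \frac{1}{\pi}\int_{-T}^{T}\left|\frac{\phi(t)-\gamma(t)}{t}\right|\,dt\ +\ \frac{c_1}{T},
\]
where $c_1$ is an absolute constant coming from $\sup_x N'(x)=(2\pi)^{-1/2}$. This reduces everything to a pointwise estimate of $|\phi(t)-\gamma(t)|$ on $|t|\le T$ together with a judicious choice of $T$.

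The pointwise estimate is the technical core. Taylor-expanding each factor, $\phi_i(t)=1-\tfrac{1}{2}\sigma_i^2t^2+\theta_i(t)$ with $|\theta_i(t)|\le\tfrac{1}{6}\kappa_i|t|^3$, one compares $\phi$ with $\gamma$ via the telescoping identity
\[
\phi(t)-\gamma(t)=\sum_{k=1}^n\Bigl(\prod_{i<k}\gamma_i(t)\Bigr)\bigl(\phi_k(t)-\gamma_k(t)\bigr)\Bigl(\prod_{i>k}\phi_i(t)\Bigr).
\]
The middle factor is $\theta_k(t)$ plus the $O(\sigma_k^4t^4)$ error from expanding $\gamma_k$; the leading product $\prod_{i<k}\gamma_i(t)$ supplies Gaussian decay; and for the trailing product one uses an auxiliary bound $|\phi_i(t)|\le e^{-c_2\sigma_i^2t^2}$, valid once $|t|$ is below an appropriate threshold, which is exactly where the third-moment hypothesis is genuinely needed. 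Summing over $k$ and invoking $\sum_i\sigma_i^2=1$ and $\sum_i\kappa_i=\lambda_n$ yields a bound of the form $|\phi(t)-\gamma(t)|\le c_3\lambda_n\bigl(|t|^3+t^4\bigr)e^{-c_4t^2}$ on a range $|t|\le c_5\lambda_n^{-1}$. Dividing by $|t|$ and integrating over that range bounds the smoothing integral by $O(\lambda_n)$; taking $T:=c_5\lambda_n^{-1}$ also makes the tail term $c_1/T$ equal to $O(\lambda_n)$, and adding the two contributions gives $\sup_x|S_n(x)-N(x)|\le c_0\lambda_n$, as asserted.

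The hard part is the pointwise estimate when $|t|$ is comparable to $\lambda_n^{-1}$: there the Taylor expansion of an individual factor no longer dominates its Gaussian counterpart, and one must argue, by a more delicate regime-dependent analysis, that $\prod_i|\phi_i(t)|$ still decays like $e^{-ct^2}$ without leaking a factor larger than $\lambda_n$ --- this is precisely the part of Berry--Esseen that is sensitive to the non-identically-distributed structure. A second, purely quantitative difficulty --- and the reason the statement records a numerical window $c_0\in(0.4097,0.56)$ rather than an unnamed constant --- is the optimization of $c_1,\dots,c_5$ and of $T$; pinning down the optimal $c_0$ is a long-studied problem in its own right, so in practice I would simply invoke the sharpened bound from \cite{E56,I10} rather than reprove it.
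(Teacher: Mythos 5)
The paper does not prove this lemma at all: it is stated as a known classical result and attributed to Esseen~\cite{E56} and Shevtsova~\cite{I10}, so there is no internal proof to compare against. Your outline of the characteristic-function route --- reduction to the case $c_0\lambda_n<1$, normalization $B_n=1$, Esseen's smoothing inequality, the telescoping comparison of $\prod_i\phi_i$ with $\prod_i\gamma_i$, third-order Taylor control of each factor, and the choice $T\asymp\lambda_n^{-1}$ --- is the correct standard skeleton of the Berry--Esseen proof, and the elementary observations you use along the way (scale invariance of $S_n$ and $\lambda_n$, the Lyapunov bound $\sigma_i^2\le\kappa_i^{2/3}$) are sound. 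However, as written this is a proof sketch rather than a proof: the decisive pointwise estimate $|\phi(t)-\gamma(t)|\le c_3\lambda_n(|t|^3+t^4)e^{-c_4t^2}$ on the full range $|t|\le c_5\lambda_n^{-1}$ is asserted rather than established (you yourself flag the regime $|t|\asymp\lambda_n^{-1}$ as the hard part), and no chain of explicit constants is produced, so the stated numerical value $c_0<0.56$ is out of reach by this argument alone. Your closing decision to invoke the cited references for the sharp constant is therefore the right one, and it is exactly what the paper does; note also that the interval $(0.4097,0.56)$ in the statement records where the \emph{optimal} constant is known to lie (Esseen's lower bound $\approx0.4097$ and Shevtsova's upper bound $0.56$), and the lemma is applied in the paper only with the upper value, so nothing downstream depends on reproving it.
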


The following moments' estimation will be used frequently in this section.
\begin{lemma}\label{thm:complex}
Let $m\ge3$ be an integer or $m=\infty$. Let $\bxi=(\xi_1,\xi_2,\dots,\xi_{n})^{\T}\in\C^n$ whose entries are i.i.d.\ uniformly on $\Om_m$ and $\ba \in \C^{n}$ be fixed. If we define $\eta=\re(\ba^{\T}\bxi)$, then
\begin{align*}
  \ex\eta&=0,\\
  \ex\eta^2&=\frac{1}{2}\sum_{i=1}^n|a_i|^2, \\
  \ex\eta^4&=\left\{
    \begin{array}{ll}
    \frac{1}{16} \sum_{i=1}^{n}(a_{i}^4+\ov{a_{i}}^{4})+\frac{3}{8} \sum_{i=1}^{n}|a_{i}|^{4}+\frac{3}{2} \sum_{1\le i<j \le n}|a_{i}|^2|a_{j}|^2&m=4\\
    \frac{3}{8} \sum_{i=1}^{n}|a_{i}|^{4}+\frac{3}{2} \sum_{1\le i<j \le n}|a_{i}|^2|a_{j}|^2&m\neq 4.
    \end{array}
    \right.
\end{align*}
\end{lemma}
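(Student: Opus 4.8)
The plan is to compute the moments of $\eta = \re(\ba^\T\bxi)$ directly by expanding powers and using independence together with the low-order moments of a single uniform variable on $\Om_m$. Write $\xi_i = \cos\theta_i + \ii\sin\theta_i$ where, for finite $m$, $\theta_i$ is uniform on $\{2\pi\ell/m : \ell = 0,1,\dots,m-1\}$ and, for $m=\infty$, $\theta_i$ is uniform on $[0,2\pi)$. Writing $a_i = |a_i|e^{\ii\phi_i}$, one has $\re(a_i\xi_i) = |a_i|\cos(\theta_i+\phi_i)$, so $\eta = \sum_{i=1}^n |a_i|\cos(\theta_i+\phi_i)$ is a sum of independent (not identically distributed) real random variables. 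The key single-variable facts I would record first: since $m\ge 3$ or $m=\infty$, for a single $\xi$ uniform on $\Om_m$ we have $\ex\xi = 0$, $\ex\xi^2 = 0$, and $\ex|\xi|^2 = 1$; and for the fourth-order terms, $\ex\xi^4 = 0$ unless $m=4$ (in which case $\xi^4 = 1$ always, so $\ex\xi^4 = 1$), while $\ex|\xi|^4 = 1$ and $\ex(\xi^2\ov\xi^2) = 1$. These follow from $\ex\xi^k = \frac1m\sum_{\ell=0}^{m-1}\omega_m^{k\ell}$, which vanishes unless $m \mid k$.

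For $\ex\eta$ and $\ex\eta^2$, I would work with the complex form rather than the trigonometric one: $\eta = \tfrac12(\ba^\T\bxi + \ov{\ba^\T\bxi}) = \tfrac12\sum_i(a_i\xi_i + \ov{a_i}\,\ov{\xi_i})$. Then $\ex\eta = \tfrac12\sum_i(a_i\ex\xi_i + \ov{a_i}\ex\ov{\xi_i}) = 0$. For the square, expand $\eta^2 = \tfrac14\sum_{i,j}(a_i\xi_i + \ov{a_i}\,\ov{\xi_i})(a_j\xi_j + \ov{a_j}\,\ov{\xi_j})$; by independence and $\ex\xi_i = 0$, only diagonal terms $i=j$ survive, and within those only the cross terms $a_i\ov{a_i}\,\xi_i\ov{\xi_i}$ survive (since $\ex\xi_i^2 = \ex\ov{\xi_i}^2 = 0$), each contributing $|a_i|^2$, so $\ex\eta^2 = \tfrac14\cdot 2\sum_i|a_i|^2 = \tfrac12\sum_i|a_i|^2$.

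The fourth moment is the main computational obstacle, though it is still routine bookkeeping. Expand $\eta^4 = \tfrac{1}{16}\sum_{i,j,k,l}\prod(a\xi + \ov a\,\ov\xi)$ over the four indices. Using independence and that every nonzero expectation needs each index value to appear with a "balanced" set of $\xi$ versus $\ov\xi$ exponents (because $\ex\xi = \ex\xi^2 = \ex\xi^3 = 0$ and, crucially for the $m\ne 4$ case, $\ex\xi^4 = 0$), the surviving index patterns are: (a) all four indices equal, $i=j=k=l$; and (b) two distinct pairs, $\{i,i,j,j\}$ with $i\ne j$, in any of the $\binom{4}{2} = 3$ arrangements. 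For pattern (b) the only balanced choice within each pair is two $\xi$'s and two $\ov\xi$'s distributed as one-and-one per pair, giving a coefficient $|a_i|^2|a_j|^2$; counting the $3$ positional arrangements and the $2$ ways to pick $\xi/\ov\xi$ within each pair (which I must tally carefully against the $\tfrac1{16}$) yields the $\tfrac32\sum_{i<j}|a_i|^2|a_j|^2$ term. For pattern (a), the balanced subcase (two $\xi_i$, two $\ov{\xi_i}$) uses $\ex(\xi_i^2\ov{\xi_i}^2)=1$ and the number of such position-choices gives $\tfrac38\sum_i|a_i|^4$; the remaining subcase (all four $\xi_i$, or all four $\ov{\xi_i}$) contributes $a_i^4$ or $\ov{a_i}^4$ with $\ex\xi_i^4$, which is $0$ when $m\ne 4$ and $1$ when $m=4$, producing the extra $\tfrac1{16}\sum_i(a_i^4 + \ov{a_i}^4)$ term precisely in the $m=4$ case. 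Assembling the three contributions gives the stated formula; the only real care needed is tracking multiplicities and the $m=4$ exception, which is where I expect to spend the most attention.
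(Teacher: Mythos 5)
Your proposal is correct and follows essentially the same route as the paper: write $\eta=\tfrac12\sum_i(a_i\xi_i+\ov{a_i}\,\ov{\xi_i})$, use independence together with $\ex\xi=\ex\xi^2=0$, $\ex|\xi|^2=\ex|\xi|^4=1$ and $\ex\xi^4=0$ unless $m=4$, and then do the multinomial bookkeeping for the fourth moment (the paper merely organizes this by first computing $\ex\eta_i^4$ coordinatewise and then expanding $(\sum_i\eta_i)^4$ in the real summands). Just fix the count for the two-pair pattern: each unordered pair $\{i,j\}$ contributes $\binom{4}{2}=6$ positional arrangements (not $3$) times $2\times 2$ choices of $\xi$ versus $\ov\xi$, giving $24/16=3/2$ as stated.
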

\begin{proof}
  Let $\eta_i=\re (a_{i}\xi_i)=\frac{1}{2}(a_i\xi_i+\ov{a_i\xi_i})$ for $i=1,2,\dots,n$, and so $\eta=\sum_{i=1}^n\eta_i$. The first two moments of $\eta_i$ are
  \begin{align}
    \ex\eta_i&=\frac{1}{2}\left(a_i\ex\xi_i+\ov{a_i}\ex\ov{\xi_i}\right)=0, \label{eq:xi1m} \\
    \ex\eta_i^2& =\ex\left[\frac{(a_i\xi_i+\ov{a_i\xi_i})^2}{4}\right]
   = \ex\left[\frac{a_i^2\xi_i^2}{4}+\frac{a_i\ov{a_i}\xi_i\ov{\xi_i}}{2} +\frac{{\ov{a_i}}^2{\ov{\xi_i}}^2}{4}\right]
   = \frac{a_i^2}{4}\ex \xi_i^2 + \frac{|a_i|^2}{2}+ \frac{{\ov{a_i}}^2}{4}\ex {\ov{\xi_i}}^2
   =\frac{|a_i|^2}{2}, \label{eq:xi2m}
  \end{align}
  where the last equality is due to~\eqref{first-second-moments}.
  Since all the $\eta_i$'s are independent to each other, we have
  \begin{align*}
    \ex\eta&=\ex\left[\sum_{i=1}^n\eta_i\right]=\sum_{i=1}^n\ex\eta_i=0, \\
    \ex\eta^2& =\ex\left(\sum_{i=1}^n\eta_i\right)^2=\ex\left[\sum_{i=1}^n\eta_i^2+2\sum_{1\le i<j\le n}\eta_i\eta_j\right]=\sum_{i=1}^n\ex\eta_i^2 +2\sum_{1\le i<j\le n}\ex\eta_i\ex\eta_j
    =\frac{1}{2}\sum_{i=1}^n|a_i|^2.
  \end{align*}
  Moreover, the fourth moment of $\eta_i$ is
  \begin{align*}
  \ex\eta_i^4&=\ex\left[\frac{1}{16}(a_i\xi_i+\ov{a_i\xi_i})^4\right] \\
     &=\frac{1}{16}\ex \left[a_i^4\xi_i^4 + 4a_i^3\xi_i^3{\ov{a_i}}\ov{\xi_i} + 6a_i^2\xi_i^2{\ov{a_i}}^2{\ov{\xi_i}}^2 + 4a_i\xi_i{\ov{a_i}}^3{\ov{\xi_i}}^3 + {\ov{a_i}}^4{\ov{\xi_i}}^4\right] \\
     &=\frac{1}{16}a_i^4\ex\xi_i^4 + \frac{1}{16}{\ov{a_i}}^4\ex{\ov{\xi_i}}^4 + \frac{1}{4}|a_i|^2 a_i^2\ex\xi_i^2
     +\frac{1}{4}|a_i|^2{\ov{a_i}}^2\ex{\ov{\xi_i}}^2
     +\frac{3}{8}|a_i|^4\\
     &=\left\{
    \begin{array}{ll}
    \frac{1}{16} (a_{i}^4+\ov{a_{i}}^{4})+\frac{3}{8}|a_i|^4&m=4\\
    \frac{3}{8}|a_i|^4&m\neq 4.
    \end{array}
    \right.
  \end{align*}
  Therefore,
  \begin{align*}
  &~~~~\ex\eta^4 \\
  &=\ex\left(\sum_{i=1}^n\eta_i\right)^4\\
  &=\ex\left[\sum_{i=1}^n\eta_i^4+4\sum_{1\le i\neq j\le n}\eta_i\eta_j^3 + 6\sum_{1\le i< j\le n} \eta_i^2\eta_j^2 + 12\sum_{1\le i<j\le n,\,k\neq i, j}\eta_i\eta_j\eta_k^2 + 24\sum_{1\le i< j< k< \ell\le n} \eta_i\eta_j\eta_k\eta_\ell\right] \\
     &=\sum_{i=1}^n\ex\eta_i^4+ 6\sum_{1\le i< j\le n} \ex\eta_i^2 \, \ex\eta_j^2 +\sum_{i=1}^n\ex[\eta_i] \, \ex[p_i(\eta_1,\dots,\eta_{i-1},\eta_{i+1},\dots,\eta_n)] \\
     &=\left\{
    \begin{array}{ll}
    \frac{1}{16} \sum_{i=1}^{n}(a_{i}^4+\ov{a_{i}}^{4})+\frac{3}{8} \sum_{i=1}^{n}|a_{i}|^{4}+\frac{3}{2} \sum_{1\le i<j \le n}|a_{i}|^2|a_{j}|^2&m=4\\
    \frac{3}{8} \sum_{i=1}^{n}|a_{i}|^{4}+\frac{3}{2} \sum_{1\le i<j \le n}|a_{i}|^2|a_{j}|^2&m\neq 4,
    \end{array}
    \right.
  \end{align*}
  where $p_i(\eta_1,\dots,\eta_{i-1},\eta_{i+1},\dots,\eta_n)$ is a cubic polynomial function of $(\eta_1,\dots,\eta_{i-1},\eta_{i+1},\dots,\eta_n)$ for $i=1,2,\dots,n$.
\end{proof}

The following lemma follows straightforwardly from the Berry-Esseen inequality.
\begin{lemma}\label{lem:ineq}
Let $m\ge3$ be an integer or $m=\infty$. If $\bxi=(\xi_1,\xi_2,\dots,\xi_{n})^{\T}\in\C^n$ whose entries are i.i.d.\ uniformly on $\Om_m$, then for any $\ba \in \C^{n}$
$$
\left|\Prob\left\{\frac{\sqrt{2}\,\re(\ba^{\T}\bxi)}{\|\ba\|_2}\le x\right\}-N(x)\right| \le \frac{2\sqrt{2}\,c_0\|\ba\|_\infty}{\|\ba\|_2}.
$$
\end{lemma}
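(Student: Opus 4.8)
The plan is to read the statement off as an immediate consequence of the Berry--Esseen inequality (Lemma~\ref{thm:be}), applied to the same coordinatewise decomposition used in the proof of Lemma~\ref{thm:complex}. Write $\eta=\re(\ba^{\T}\bxi)=\sum_{i=1}^n\eta_i$ with $\eta_i=\re(a_i\xi_i)$; since the $\xi_i$ are independent, the $\eta_i$ are independent real random variables. First I would dispose of trivialities: the inequality is vacuous when $\ba=\boldsymbol 0$, so assume $\ba\neq\boldsymbol 0$; and any index with $a_i=0$ contributes $\eta_i\equiv 0$ and can be deleted from the sum without changing either side, so I may assume $a_i\neq 0$ for all $i$, which guarantees the variances are strictly positive as Lemma~\ref{thm:be} requires.

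Next I would collect the three moments. From~\eqref{eq:xi1m}--\eqref{eq:xi2m} we have $\ex\eta_i=0$ and $\sigma_i^2:=\ex\eta_i^2=\tfrac12|a_i|^2$, hence $\sum_{i=1}^n\sigma_i^2=\tfrac12\|\ba\|_2^2$. For the third absolute moment I would use that $\xi_i$ lies on $\Om_m$ (or on $\Om_\infty$ when $m=\infty$), so $|\xi_i|=1$ and $|\eta_i|=|\re(a_i\xi_i)|\le|a_i\xi_i|=|a_i|$, which gives $\kappa_i:=\ex|\eta_i|^3\le|a_i|^3<\infty$. The normalized partial sum appearing in Lemma~\ref{thm:be} is then
$$
s_n=\frac{\sum_{i=1}^n\eta_i}{\sqrt{\sum_{i=1}^n\sigma_i^2}}=\frac{\sqrt{2}\,\re(\ba^{\T}\bxi)}{\|\ba\|_2},
$$
so its cumulative distribution function is exactly the probability in the statement.

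Applying Lemma~\ref{thm:be}, and then bounding $\sum_{i=1}^n\kappa_i\le\sum_{i=1}^n|a_i|^3\le\|\ba\|_\infty\sum_{i=1}^n|a_i|^2=\|\ba\|_\infty\|\ba\|_2^2$ together with $\bigl(\sum_{i=1}^n\sigma_i^2\bigr)^{3/2}=\bigl(\tfrac12\|\ba\|_2^2\bigr)^{3/2}=\|\ba\|_2^3/(2\sqrt 2)$, I obtain
$$
\sup_{x\in\R}\left|\Prob\left\{\frac{\sqrt{2}\,\re(\ba^{\T}\bxi)}{\|\ba\|_2}\le x\right\}-N(x)\right|\le\frac{c_0\sum_{i=1}^n\kappa_i}{\left(\sum_{i=1}^n\sigma_i^2\right)^{3/2}}\le\frac{2\sqrt{2}\,c_0\|\ba\|_\infty}{\|\ba\|_2},
$$
which is the claim. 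There is no genuine obstacle: the only points that need care are the reduction to $a_i\neq 0$ so that the positivity hypothesis of Berry--Esseen holds, and the use of the deliberately lossy bound $\kappa_i\le|a_i|^3$ (rather than the sharper $\kappa_i\le\tfrac12|a_i|^3$) so that the resulting constant is exactly the stated $2\sqrt{2}\,c_0$; a smaller constant is available but is not needed later.
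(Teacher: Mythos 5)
Your proof is correct and follows essentially the same route as the paper: the same coordinatewise decomposition $\eta=\sum_i\re(a_i\xi_i)$, the same moment computations from Lemma~\ref{thm:complex}, the same bound $\ex|\eta_i|^3\le|a_i|^3$, and the same application of Lemma~\ref{thm:be} followed by $\sum_i|a_i|^3\le\|\ba\|_\infty\|\ba\|_2^2$. The only addition is your explicit remark that a sharper third-moment bound is available but unnecessary, which does not change the argument.
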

\begin{proof}
Without loss of generality, we assume $a_i\neq 0$ for $i=1,2,\dots,n$ since otherwise we may delete all the zero entries of $\ba$ while keeping values of both sides of the inequality unchanged. Let $\eta_i=\re (a_{i}\xi_i)$ for $i=1,2,\dots,n$. According to~\eqref{eq:xi1m} and~\eqref{eq:xi2m} in the proof of Lemma~\ref{thm:complex}, we have $$\ex \eta_{i}=0 \mbox{ and } \ex\eta_{i}^2=\frac{1}{2}|a_i|^2>0 \mbox{ for } i=1,2,\dots, n.$$
Moreover, for $i=1,2,\dots, n$,
$$\ex|\eta_{i}|^3 = \ex |\re (a_{i}\xi_i)|^3\le |a_i|^3\ex |\xi_i|^3=|a_{i}|^{3}.$$
By applying Lemma~\ref{thm:be}, we get
\begin{align*}
\left|\Prob\left\{\frac{\sqrt{2}\,\re(\ba^{\T}\bxi)}{\|\ba\|_2}\le x\right\}-N(x)\right|
&\le \frac{2\sqrt{2}\,c_0\sum_{i=1}^{n}|a_{i}|^{3}}{\|\ba\|_2^3} \\
&\le \frac{2\sqrt{2}\,c_0\max_{1\le i\le n}\{|a_i|\}\sum_{i=1}^{n}|a_{i}|^2}{\|\ba\|_2^3}  \\
&= \frac{2\sqrt{2}\,c_0\|\ba\|_\infty}{\|\ba\|_2}.
\end{align*}
\end{proof}

We are now ready to present the main probability inequality. The result can be taken as a generalization of~\cite[Lemma 3.2]{KN08} where the case of symmetric Bernoulli random variables is discussed, while ours is the uniform distribution on $\Om_m$ or $\Om_\infty$.
\begin{theorem}\label{thm:key1}
Let $m\ge3$ be an integer or $m=\infty$. If $\bxi=(\xi_1,\xi_2,\dots,\xi_{n})^{\T}\in\C^n$ whose entries are i.i.d.\ uniformly on $\Om_m$, then for any $\ba \in \C^{n}$ and $\delta \in \left(0,\frac{1}{16}\right)$, there exists a constant $c_1(\delta)>0$ such that
$$
\Prob\left\{\re(\ba^{\T}\bxi)\ge \sqrt{\frac{\delta \ln n}{n}}\|\ba\|_1\right\} \ge \frac{c_1(\delta)}{c_2(m)n^{5\delta}},
$$
where $c_2(m):=\min\{k\ge2:k \mbox{ is a divisor of } m\}\le m$, in particular, $c_2(\infty)=2$.
\end{theorem}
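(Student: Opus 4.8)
The plan is to follow the Khot–Naor strategy: reduce the statement to a second-moment-type estimate on a normalized partial sum, use the Berry–Esseen bound of Lemma~\ref{lem:ineq} to control the Gaussian approximation, and then split the analysis according to whether the $L_\infty/L_2$ ratio of $\ba$ is small or large. Write $\eta := \re(\ba^{\T}\bxi)$, so by Lemma~\ref{thm:complex} we have $\ex\eta = 0$ and $\ex\eta^2 = \tfrac12\|\ba\|_2^2$. The target event is $\{\eta \ge \sqrt{\delta(\ln n)/n}\,\|\ba\|_1\}$, and since $\|\ba\|_1 \le \sqrt{n}\,\|\ba\|_2$, it suffices to lower-bound $\Prob\{\eta \ge \sqrt{\delta \ln n}\,\|\ba\|_2\}$, i.e. $\Prob\{\sqrt{2}\,\eta/\|\ba\|_2 \ge \sqrt{2\delta \ln n}\}$.

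First I would dispose of the ``well-spread'' case, where $\|\ba\|_\infty/\|\ba\|_2 \le \varepsilon(\delta)$ for a suitably small threshold. Here Lemma~\ref{lem:ineq} gives $\bigl|\Prob\{\sqrt2\,\eta/\|\ba\|_2 \le x\} - N(x)\bigr| \le 2\sqrt2\,c_0\,\varepsilon(\delta)$, so $\Prob\{\sqrt2\,\eta/\|\ba\|_2 \ge t\} \ge 1 - N(t) - 2\sqrt2\,c_0\,\varepsilon(\delta)$. Using the standard Gaussian tail lower bound $1 - N(t) \gtrsim t^{-1}e^{-t^2/2}$ with $t = \sqrt{2\delta \ln n}$ yields $1 - N(t) \gtrsim (\ln n)^{-1/2} n^{-\delta}$, which for $\delta < \tfrac1{16}$ comfortably beats $n^{-5\delta}$; choosing $\varepsilon(\delta)$ small (and absorbing the $(\ln n)^{-1/2}$ factor, valid for $n$ large, with small $n$ handled by a crude constant) produces the claimed bound with $c_2(m)$ replaced by $2$, which is fine since $c_2(m)\ge 2$.

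The substantive case is when $\|\ba\|_\infty/\|\ba\|_2 > \varepsilon(\delta)$, i.e. some coordinate, say $a_1$, is ``heavy.'' The idea is to condition on $\xi_1$: with probability $1/c_2(m)$ the value $\xi_1$ lands in the half-plane making $\re(a_1\xi_1) \ge \rho|a_1|$ for some absolute constant $\rho>0$ — this is where $c_2(m)$ enters, since over $\Om_m$ the best guaranteed alignment of a single root with a prescribed direction costs a factor of the smallest nontrivial divisor of $m$ (for $m=\infty$ it is free, $c_2(\infty)=2$ coming from a half-circle), and one should state and prove this elementary geometric fact as a sub-lemma. Having secured a contribution of order $|a_1| \gtrsim \varepsilon(\delta)\|\ba\|_2$ from the first coordinate, I would then recurse on $\ba' := (a_2,\dots,a_n)$: either $\ba'$ is again heavy in some coordinate (iterate), or it is well-spread and the previous case applies to the tail sum $\re(\sum_{i\ge 2} a_i\xi_i)$. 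Each iteration of the heavy case costs one factor of $c_2(m)$ in probability and, crucially, the number of such iterations is bounded by a constant depending only on $\delta$ (since each heavy coordinate removes at least an $\varepsilon(\delta)^2$ fraction of $\|\ba\|_2^2$), so the total cost is $c_2(m)^{O_\delta(1)}$ — but the theorem asks for a single factor $c_2(m)$, so one must be more careful: the right move is a single split, not a full recursion. Concretely, reorder so that $|a_1|\ge \dots \ge |a_n|$, let $k$ be the largest index with $|a_k| > \varepsilon(\delta)\|\ba\|_2/\sqrt{k}$ or similar, bound the prefix sum by conditioning jointly on $\xi_1,\dots,\xi_k$ in a single event of probability $\ge c_2(m)^{-1}\cdot(\text{const})$ — using that aligning finitely many roots into a common half-plane still succeeds with a probability bounded below by $c_2(m)^{-1}$ times a constant, because once one coordinate is fixed favorably the rest only need to avoid being catastrophically anti-aligned — and handle the suffix via Berry–Esseen.

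The hard part will be getting the single-factor $c_2(m)$ rather than $c_2(m)^{O_\delta(1)}$: this requires arranging the ``heavy-coordinate'' step so that only one conditioning event genuinely depends on the arithmetic of $m$, with all remaining randomness handled by the (dimension-robust) Berry–Esseen estimate of Lemma~\ref{lem:ineq}. The bookkeeping of the threshold $\varepsilon(\delta)$, the exponent $5\delta$ (the ``$5$'' is slack deliberately built in to absorb the $(\ln n)^{-1/2}$ factor, small-$n$ effects, and the constant losses in the Gaussian tail and in the heavy-coordinate alignment), and the definition of the constant $c_1(\delta)>0$ are then routine. I would also note that the restriction $\delta < \tfrac1{16}$ is exactly what keeps $t=\sqrt{2\delta\ln n}$ small enough that $1-N(t) \gg n^{-5\delta}$ with room to spare after the Berry–Esseen error is subtracted.
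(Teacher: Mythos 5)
Your high-level template (Berry--Esseen plus a case split on how concentrated $\ba$ is) matches the paper's, but the two steps that actually carry the theorem are missing, and the substitutes you propose do not work. First, in your ``well-spread'' case you split on whether $\|\ba\|_\infty/\|\ba\|_2$ exceeds a constant $\varepsilon(\delta)$ and then subtract the Berry--Esseen error $2\sqrt2\,c_0\,\varepsilon(\delta)$ from the Gaussian tail $1-N(\sqrt{2\delta\ln n})\asymp n^{-\delta}/\sqrt{\ln n}$. Since that tail tends to $0$ while the error is a fixed positive constant, your lower bound becomes negative for large $n$; no constant threshold $\varepsilon(\delta)$ can work. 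The paper avoids this by splitting instead on whether $\|\ba\|_1\le n^{4\delta+\frac14}\|\ba\|_2$, and in the opposite case restricting the sum to the index set $I=\{i:|a_i|\le 2\|\ba\|_2^2/\|\ba\|_1\}$: this set retains $\sqrt{\sum_{i\in I}|a_i|^2}\ge\|\ba\|_1/(2\sqrt n)$ of $\ell_2$ mass (so the target threshold is still reachable by the truncated sum), while forcing the Berry--Esseen error for $\zeta=\re(\sum_{i\in I}a_i\xi_i)$ to be $O(n^{-8\delta})$ --- polynomially small, not constant. In the case $\|\ba\|_1\le n^{4\delta+\frac14}\|\ba\|_2$ the paper does not use Berry--Esseen at all but the Paley--Zygmund inequality together with the fourth-moment bound $\ex\eta^4\le3(\ex\eta^2)^2$ from Lemma~\ref{thm:complex}, yielding a constant probability; your proposal never invokes the fourth moment, and this is also where the restriction $\delta<\frac1{16}$ is actually used ($\frac12-8\delta>0$).

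Second, your mechanism for producing the factor $c_2(m)$ --- conditioning a heavy coordinate $\xi_1$ into a favourable half-plane --- is not the right device, and, as you yourself note, it threatens to cost $c_2(m)^{O_\delta(1)}$. It also cannot reach the threshold: one heavy coordinate contributes only $O(\|\ba\|_\infty)\le O(\|\ba\|_2)$, whereas the target level grows like $\sqrt{\ln n}\,\|\ba\|_2$ after your initial reduction $\|\ba\|_1\le\sqrt n\|\ba\|_2$, so the conditioning gains you essentially nothing. In the paper, $c_2(m)$ enters through an exact algebraic identity: with $\omega_0=e^{\ii\frac{2\pi}{c_2(m)}}\in\Om_m$ one has $\sum_{k=1}^{c_2(m)}\omega_0^k=0$, hence $c_2(m)\,\zeta=\sum_{k=1}^{c_2(m)}\re\bigl(\omega_0^k\sum_{i\notin I}a_i\xi_i+\sum_{i\in I}a_i\xi_i\bigr)$ is a sum of $c_2(m)$ variables each distributed exactly as $\eta$, and a union bound gives $\Prob\{\zeta\ge t\}\le c_2(m)\Prob\{\eta\ge t\}$. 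This transfers the Gaussian lower bound on the truncated sum to the full sum at the cost of exactly one factor of $c_2(m)$. Without the truncation set $I$ and this roots-of-unity transfer, the single-factor bound --- precisely the part you flag as ``the hard part'' --- is not obtainable by your route.
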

\begin{proof}
Denote $\eta=\re(\ba^{\T}\bxi)$. We first prove that there is a constant $n_0(\delta)>0$, depending only on $\delta$, such that the inequality holds when $n\ge n_0(\delta)$ in the following two cases.

In the first case we assume that
\begin{equation}\label{eq:case1}
\|\ba\|_1\le n^{4\delta+\frac{1}{4}}\|\ba\|_2.
\end{equation}
By Lemma~\ref{thm:complex}, it is straightforward to verify that
$$3(\ex\eta^2)^2=\frac{3}{4}\left(\sum_{i=1}^n|a_i|^2\right)^2\ge \max\left\{\frac{1}{16} \sum_{i=1}^{n}(a_{i}^4+\ov{a_{i}}^{4}),0\right\}+\frac{3}{8} \sum_{i=1}^{n}|a_{i}|^{4}+\frac{3}{2} \sum_{1\le i<j \le n}|a_{i}|^2|a_{j}|^2 \ge \ex\eta^4.$$
By the Paley-Zygmund inequality, for any $t \in (0,1)$, we have
\begin{equation}\label{eq:pz}
  \Prob\left\{\eta^2\ge t\ex\eta^2\right\}\ge (1-t)^2\frac{(\ex\eta^2)^2}{\ex \eta^{4}} \ge \frac{(1-t)^2}{3}.
\end{equation}
Given any $t\ge 0$, consider the event $\left\{\eta^2\ge t^2\right\}=\left\{\eta\ge t \right\} \cup \left\{-\eta\ge t \right\}$. If $m$ is even or $m=\infty$, $\Om_m$ is central symmetric and so $\eta=\re(\ba^{\T}\bxi)$ is also symmetric, leading to
\begin{equation}\label{eq:squarebound}
\Prob\left\{\eta^2\ge t^2\right\} = 2\,\Prob\left\{\eta\ge t \right\} \le 3\,\Prob\left\{\eta\ge\frac{t}{2} \right\}.
\end{equation}
If $m$ is odd, it is obvious that both $\omega_m^{\frac{m+1}{2}}\bxi$ and $\omega_m^{\frac{m-1}{2}}\bxi$ have the same distribution to $\bxi$ as $\omega_m^{\frac{m\pm1}{2}}\in\Om_m$, and so both $\re(\ba^{\T}(\omega_m^{\frac{m+1}{2}}\bxi))$ and $\re(\ba^{\T}(\omega_m^{\frac{m-1}{2}}\bxi))$ have the same distribution to $\eta$. By noticing
$$
\re(\ba^{\T}(\omega_m^{\frac{m+1}{2}}\bxi))+\re(\ba^{\T}(\omega_m^{\frac{m-1}{2}}\bxi))
=\re((\omega_m^{\frac{m+1}{2}}+\omega_m^{\frac{m-1}{2}})\ba^{\T}\bxi)
=\re\left(\left(-2\cos\frac{\pi}{m}\right)\ba^{\T}\bxi\right)=-2\eta\cos\frac{\pi}{m}
$$
and $\cos\frac{\pi}{m}\ge\cos\frac{\pi}{3}=\frac{1}{2}$, we have
\begin{align*}
  \Prob\left\{-\eta\ge t \right\}&\le \Prob\left\{-2\eta\cos\frac{\pi}{m}\ge t \right\}\\
  &= \Prob\left\{\re(\ba^{\T}(\omega_m^{\frac{m+1}{2}}\bxi))+\re(\ba^{\T}(\omega_m^{\frac{m-1}{2}}\bxi))\ge t \right\}\\
  &\le \Prob\left\{\re(\ba^{\T}(\omega_m^{\frac{m+1}{2}}\bxi))\ge \frac{t}{2} \right\} +\Prob\left\{\re(\ba^{\T}(\omega_m^{\frac{m-1}{2}}\bxi))\ge \frac{t}{2} \right\} \\
  &=2\,\Prob\left\{\eta\ge \frac{t}{2} \right\}.
\end{align*}
Together with the obvious fact that $\Prob\left\{\eta\ge t \right\}\le \Prob\left\{\eta\ge\frac{t}{2}\right\}$, we arrive at
$$\Prob\left\{\eta^2\ge t^2\right\}= \Prob\left\{\eta\ge t \right\} +\Prob\left\{-\eta\ge t \right\}\le 3 \,\Prob\left\{\eta \ge \frac{t}{2}\right\}.$$
We conclude that~\eqref{eq:squarebound} holds when $m\ge3$ is an integer or $m=\infty$.
%
As $\delta \in (0,\frac{1}{16})$, we can define $n_1(\delta):=\min\left\{n>0: \frac{8\delta \ln n}{n^{\frac{1}{2}-8\delta}}\le\frac{1}{2} \right\}$. Therefore, when $n\ge n_1(\delta)$, it follows from~\eqref{eq:squarebound}  that
\begin{align*}
\Prob\left\{\eta\ge \sqrt{\frac{\delta \ln n}{n}}\|\ba\|_1\right\}&\ge \frac{1}{3}\,\Prob\left\{\eta^2\ge \frac{4\delta \ln n}{n}\|\ba\|_1^2\right\}\\
&\ge\frac{1}{3}\,\Prob\left\{\eta^2\ge \frac{4\delta \ln n}{n^{\frac{1}{2}-8\delta}}\|\ba\|_2^2\right\}\\
&=\frac{1}{3}\,\Prob\left\{\eta^2\ge \frac{8\delta \ln n}{n^{\frac{1}{2}-8\delta}}\ex\eta^2\right\}\\
&\ge\frac{1}{3}\,\Prob\left\{\eta^2\ge \frac{1}{2}\ex\eta^2\right\}\\
&\ge\frac{1}{3}\cdot \frac{1}{3}\cdot \left(1-\frac{1}{2}\right)^2 =\frac{1}{36},
\end{align*}
where the second inequality is due to~\eqref{eq:case1} and the last inequality is due to~\eqref{eq:pz}.

In the second case we assume that
\begin{equation}\label{eq:case2}
\|\ba\|_1 > n^{4\delta+\frac{1}{4}}\|\ba\|_2.
\end{equation}
Let $I=\left\{i\in\{1,2,\dots,n\}: |a_i|\le\frac{2\|\ba\|_2^2}{\|\ba\|_1}\right\}$ and define $\zeta:=\re\left(\sum_{i \in I}a_i\xi_i\right)$. It holds that
$$
\|\ba\|_1=\sum_{i \notin I}\frac{|a_i|^2}{|a_i|}+ \sum_{i \in I}|a_i|\le\frac{\|\ba\|_1}{2\|\ba\|_2^2} \sum_{i \notin I}|a_i|^2+\sqrt{|I|\sum_{i \in I}|a_i|^2}\le \frac{\|\ba\|_1}{2}+\sqrt{n\sum_{i \in I}|a_i|^2},
$$
implying that
\begin{equation}\label{ineq:par}
\frac{\|\ba\|_1}{2\sqrt{n}}\le \sqrt{\sum_{i\in I}|a_i|^2}= \sqrt{2\ex\zeta^2}.
\end{equation}
As $c_2(m)\ge2$ is a divisor of $m$, $\omega_0:=e^{\ii\frac{2\pi}{c_2(m)}}\in\Om_m$ and $\sum_{k=1}^{c_2(m)}\omega_0^k=0$, implying that
$$
\sum_{k=1}^{c_2(m)}\re\left(\omega_0^k\sum_{i \notin I}a_i\xi_i+\sum_{i \in I}a_i\xi_i\right)= c_2(m)\re\left(\sum_{i \in I}a_i\xi_i\right) + \re\left(\left(\sum_{k=1}^{c_2(m)}\omega_0^k\right)\sum_{i \notin I}a_i\xi_i\right)=
c_2(m)\zeta.
$$
For any $t\in \R$, if $\sum_{k=1}^{c_2(m)}\re\left(\omega_0^k\sum_{i \notin I}a_i\xi_i+\sum_{i \in I}a_i\xi_i\right)\ge c_2(m)t$, then there must exist some $k\in\{1,2,\dots,c_2(m)\}$ such that $\re\left(\omega_0^k\sum_{i \notin I}a_i\xi_i+\sum_{i \in I}a_i\xi_i\right)\ge t $. Therefore
\begin{align*}
\Prob\left\{\zeta\ge t\right\}&=\Prob\left\{\sum_{k=1}^{c_2(m)}\re\left(\omega_0^k\sum_{i \notin I}a_i\xi_i+\sum_{i \in I}a_i\xi_i\right)\ge c_2(m)t\right\} \\
&\le\sum_{k=1}^{c_2(m)}\Prob\left\{\re\left(\omega_0^k\sum_{i \notin I}a_i\xi_i+\sum_{i \in I}a_i\xi_i\right)\ge t\right\} \\
&= c_2(m) \Prob\left\{\eta\ge t\right\},
\end{align*}
where the last equality holds because $\re\left(\omega_0^k\sum_{i \notin I}a_i\xi_i+\sum_{i \in I}a_i\xi_i\right)$ has the exact same distribution to $\eta$ as $\omega_0^k\in\Om_m$ for $k=1,2,\dots,c_2(m)$. By letting $t=\sqrt{\frac{\delta \ln n}{n}}\|\ba\|_1$ in the above, we arrive at

\begin{align*}
\Prob\left\{\eta \ge \sqrt{\frac{\delta  \ln n}{n}}\|\ba\|_1\right\}&\ge \frac{1}{c_2(m)}\Prob\left\{\zeta\ge \sqrt{\frac{\delta \ln n}{n}}\|\ba\|_1\right\}\\
&=\frac{1}{c_2(m)}\Prob\left\{\frac{\zeta}{\sqrt{\ex\zeta^2}}\ge \sqrt{\frac{\delta\ln n}{n}}\frac{\|\ba\|_1}{\sqrt{\ex\zeta^2}}\right\}\\
&\ge \frac{1}{c_2(m)}\Prob\left\{\frac{\zeta}{\sqrt{\ex \zeta^2}}\ge \sqrt{8\delta\ln n}\right\}\\
&\ge \frac{1}{c_2(m)}\left(1-N\left(\sqrt{8\delta\ln n}\right)-\frac{2\sqrt{2}c_0\max_{i\in I}|a_i|}{\sqrt{\sum_{i\in I}|a_i|^2}}\right)\\
&\ge \frac{1}{c_2(m)}\left(\int_{\sqrt{8\delta\ln n}}^{\sqrt{8\delta\ln n}+1} \frac{1}{\sqrt{2\pi}}e^{-\frac{x^2}{2}}dx -\frac{8 c_0\sqrt{2n}\|\ba\|_2^2}{\|\ba\|_1^2} \right)\\
&\ge\frac{1}{c_2(m)}\left(\frac{1}{\sqrt{2\pi}}e^{-\frac{(\sqrt{8\delta\ln n}+1)^2}{2}}-\frac{8\sqrt{2} c_0}{n^{8\delta}}\right)\\
&\ge\frac{1}{c_2(m)}\left(\frac{1}{\sqrt{2\pi}n^{5\delta}}-\frac{8\sqrt{2}c_0}{n^{8\delta}}\right) \ge\frac{1}{3c_2(m)n^{5\delta}}
\end{align*}
for $n\ge n_2(\delta):=\min\left\{n>0: \frac{(\sqrt{8\delta\ln n}+1)^2}{2} \le 5\delta\ln n, ~\frac{1}{\sqrt{2\pi}n^{5\delta}}-\frac{8 \sqrt{2}c_0}{n^{8\delta} }\ge \frac{1}{3n^{5\delta}}\right\}$, where the second, third, fourth and fifth inequalities are due to~\eqref{ineq:par}, Lemma~\ref{lem:ineq},~\eqref{ineq:par} and~\eqref{eq:case2}, respectively.

To conclude the proof, it remains to settle the case for $n\le n_0(\delta)=\max\{n_1(\delta),n_2(\delta)\}$. For $i\in\{1,2,\dots,n\}$, consider the set $\Theta_i=\left\{z\in \Om_\infty:|\arg z-\arg a_i|\le\frac{\pi}{3}\right\}$. We have
$$
\Prob\left\{\xi_i\in\Theta_i\right\}\ge\left\{
\begin{array}{ll}
    \frac{\lfloor m/3\rfloor}{m}\ge\frac{1}{5}&m\ge3\\
    \frac{1}{3} & m=\infty,
    \end{array}
\right.
$$
and $\xi_i\in\Theta_i$ implies that $\re(a_i\xi_i)\ge|a_i|\cos\frac{\pi}{3}=\frac{|a_i|}{2}$. Therefore
$\Prob\left\{\re(a_i\xi_i)\ge\frac{|a_i|}{2}\right\}\ge \frac{1}{5}$. By the independence of $\xi_i$'s, we have
$$\Prob\left\{\eta\ge \sqrt{\frac{\delta \ln n}{n}}\|\ba\|_1\right\}\ge \Prob\left\{\eta\ge\frac{\|\ba\|_1}{2}\right\}\ge\prod_{i=1}^n \Prob\left\{\re(a_i\xi_i)\ge\frac{|a_i|}{2}\right\} \ge\frac{1}{5^n}\ge\frac{1}{5^{n_0(\delta)}}.$$

To summarize, for any $\delta \in \left(0,\frac{1}{16}\right)$, there exists $n_0(\delta)>0$, such that
$$
\Prob\left\{\eta\ge \sqrt{\frac{\delta \ln n}{n}}\|\ba\|_1\right\}\ge\left\{
    \begin{array}{ll}
    \min\left\{\frac{1}{3c_2(m)n^{5\delta}},\frac{1}{36}\right\} & n\ge n_0(\delta)\\
    \frac{1}{5^{n_0(\delta)}}& n< n_0(\delta).
    \end{array}
    \right.
$$
Define $c_1(\delta):=\frac{1}{36\cdot 5^{n_0(\delta)}}$ and the lower bound $\frac{c_1(\delta)}{c_2(m)n^{5\delta}}$ holds for all $n$.
\end{proof}
%

Theorem~\ref{thm:key1} provides a lower bound for the random sampling on $\Om_m^n$. For the random sampling on the complex sphere $\BS^n$, we have the following inequality, which is analogous to Theorem~\ref{thm:key1}.
\begin{theorem}\label{thm:key2}
If $\bxi$ is a uniform distribution on $\BS^n$, then for any $\ba \in \C^{n}$ and $\gamma>0$ with $\gamma\ln n<n$, there exists a constant $c_3(\gamma)>0$, such that
$$
\Prob\left\{\re(\ba^{\T}\bxi)\ge \sqrt{\frac{\gamma\ln n}{n}} \| \ba\|_2\right\}\ge \frac{c_{3}(\gamma)}{n^{2\gamma}\sqrt{\ln n}}.
$$
\end{theorem}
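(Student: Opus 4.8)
The plan is to use the elementary fact that $\BS^n$, regarded inside $\R^{2n}$, is precisely the real unit sphere $S^{2n-1}$, and that $\re(\ba^{\T}\bxi)=\sum_{i=1}^n\left(\re a_i\,\re\xi_i-\im a_i\,\im\xi_i\right)$ is a real linear functional of the $2n$ real coordinates of $\bxi$ with coefficient vector of Euclidean norm $\|\ba\|_2$. First I would normalize $\|\ba\|_2=1$ by homogeneity, and then use rotational invariance of the uniform measure on $S^{2n-1}$ to reduce the problem to lower-bounding $\Prob\{U\ge s\}$, where $s:=\sqrt{\gamma\ln n/n}$ and $U$ denotes a single coordinate of a uniform random point on $S^{2n-1}$. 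The law of $U$ has density $\frac{\Gamma(n)}{\sqrt{\pi}\,\Gamma(n-1/2)}(1-t^2)^{(2n-3)/2}$ on $[-1,1]$, and $s<1$ is exactly what the hypothesis $\gamma\ln n<n$ guarantees.

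Next I would estimate the normalizing constant and the tail integral separately. From $(1-t^2)^{(2n-3)/2}\le e^{-(2n-3)t^2/2}$ we get $\int_{-1}^1(1-t^2)^{(2n-3)/2}\,dt\le\sqrt{2\pi/(2n-3)}$, so the normalizing constant is at least $\sqrt{n/(2\pi)}$ for $n\ge3$. For the tail, assume for the moment that $\sqrt2\,s<1$; restricting the integral to $[s,\sqrt2\,s]$ and using that the integrand is decreasing there,
$$
\int_s^1(1-t^2)^{(2n-3)/2}\,dt\ \ge\ (\sqrt2-1)\,s\,(1-2s^2)^{(2n-3)/2}.
$$
Since $2s^2=2\gamma\ln n/n\to0$, combining $(1-x)^k\ge e^{-k(x+x^2)}$ (valid for $x\in[0,1/2]$) with $\frac{2n-3}{2}<n$ gives $(1-2s^2)^{(2n-3)/2}\ge n^{-2\gamma}e^{-O((\ln n)^2/n)}\ge\frac12\,n^{-2\gamma}$ once $n$ exceeds a threshold $n_0(\gamma)$. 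Multiplying the three estimates,
$$
\Prob\{U\ge s\}\ \ge\ \sqrt{\frac{n}{2\pi}}\cdot(\sqrt2-1)\,s\cdot\frac12\,n^{-2\gamma}\ =\ \frac{\sqrt2-1}{2\sqrt{2\pi}}\,\sqrt{\gamma\ln n}\;n^{-2\gamma}\ \ge\ \frac{(\sqrt2-1)\sqrt\gamma}{2\sqrt{2\pi}}\cdot\frac{n^{-2\gamma}}{\sqrt{\ln n}},
$$
the last step because $\sqrt{\ln n}\ge1/\sqrt{\ln n}$ whenever $\ln n\ge1$.

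It then remains to dispose of the small values of $n$. The requirement $\sqrt2\,s<1$, i.e.\ $2\gamma\ln n<n$, may fail while $\gamma\ln n<n$ still holds, but for fixed $\gamma$ this occurs for only finitely many $n$, and the same is true of the condition $n>n_0(\gamma)$ used above. For each of those finitely many remaining $n$ the event $\{U\ge s\}$ has strictly positive probability (since $s<1$), so the minimum of $\Prob\{U\ge s\}\cdot n^{2\gamma}\sqrt{\ln n}$ over this finite set is a positive number; taking $c_3(\gamma)$ to be the smaller of this minimum and $\frac{(\sqrt2-1)\sqrt\gamma}{2\sqrt{2\pi}}$ makes the claimed inequality hold for all admissible $n$. (Alternatively, one could bypass the direct computation by applying a free-parameter version of the real unit-sphere bound of Brieden et al.~\cite{BGKKLS98} or He et al.~\cite{HJLZ14} in dimension $2n$ with parameter $2\gamma$, using $\sqrt{2\gamma\ln(2n)/(2n)}\ge\sqrt{\gamma\ln n/n}$ and $(2n)^{\gamma}\le n^{2\gamma}$ to recover the exponent $n^{-2\gamma}$.)

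The part I expect to be the real obstacle is getting the exponent to be \emph{exactly} $2\gamma$ rather than something larger. This rests on two points: the ``effective dimension'' being $2n$, so that $(1-t^2)^{(2n-3)/2}$ decays like $e^{-nt^2}$ rather than $e^{-nt^2/2}$; and the precise calibration of the integration window, whose right endpoint $\sqrt2\,s$ is chosen so that $n(\sqrt2\,s)^2=2\gamma\ln n$. Everything else — the precise size of $\Gamma(n)/\Gamma(n-1/2)$, the gap between $2n-3$ and $2n$ (which happily pushes in the favorable direction), and the error in $\ln(1-x)\approx-x$ — contributes only lower-order factors that are absorbed because $(\ln n)^2/n\to0$; checking that these corrections are uniform in $n$ is routine bookkeeping, but it is where the care must go.
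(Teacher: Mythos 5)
Your proof is correct and is essentially the argument the paper has in mind: the paper omits the proof of this theorem, deferring to Lemma~2.5 of~\cite{HJLZ14}, which is precisely the real-sphere coordinate-density computation you carry out after identifying $\BS^n$ with $S^{2n-1}\subseteq\R^{2n}$ — the doubling of the dimension being exactly what produces the exponent $2\gamma$ in place of $\gamma$, as you note. The only cosmetic caveat is that the statement (and hence your bound) must be read as requiring $n\ge 2$, since $\sqrt{\ln n}=0$ at $n=1$; for $n\ge2$ your handling of the finitely many small $n$ is sound.
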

The proof is similar to that of~\cite[Lemma 2.5]{HJLZ14}, and is left to interested readers.

\section{Complex multilinear form optimization}\label{sec:mulform}

In this section, we study approximation algorithms for the following complex multilinear form optimization models,
$$
\begin{array}{lll}
(L_m) & \max & \re F(\bx^1,\bx^2,\dots,\bx^d) \\
             & \st & \bx^k \in \Om_m^{n_k} ,\, k=1,2,\dots,d; \\
(L_\infty) & \max  & \re F(\bx^1,\bx^2,\dots,\bx^d) \\
           & \st & \bx^k \in \Om_\infty^{n_k} ,\, k=1,2,\dots,d; \\
(L_S) & \max      & \re F(\bx^1,\bx^2,\dots,\bx^d) \\
      & \st & \bx^k \in \BS^{n_k} ,\, k=1,2,\dots,d,
\end{array}
$$
where $F$ is a complex multilinear form associated with a complex tensor $\F\in\C^{n_1\times n_2\times \dots \times n_d}$. Without loss of generality, we assume that $n_1\le n_2\le\dots\le n_d$ in this section.

All these models were studied by Jiang et al.~\cite{JLZ14}. However, the algorithms in this section improve that in~\cite{JLZ14} in terms of approximation ratios. The main ingredients of our algorithms are recursions on the degree of the multilinear form and random sampling on the constraint sets.

\subsection{Multilinear form in the $m$-th roots of unity or the complex unit circle}\label{sec:mform1}

In this subsection, we discuss the discrete optimization model $(L_m)$ and the continuous one $(L_\infty)$ together as the main ideas are similar. When $d=2$, both $(L_m)$ and $(L_\infty)$ are already NP-hard. Huang and Zhang~\cite{HZ10} studied these two models for $d=2$ via semidefinite program relaxation and proposed polynomial-time randomized algorithms with constant worst-case approximation ratios $c_4(m):=0.7118\cos^2\frac{\pi}{m}$ for $(L_m)$ and $0.7118$ for $(L_\infty)$ which coincides that of $(L_m)$ when $m\rightarrow\infty$.
For $d\ge3$, Jiang et al.~\cite{JLZ14} applied some decomposition routines and proposed randomized algorithms with approximation ratios $\left(\frac{m^2}{2\pi}\sin^2\frac{\pi}{m}-1\right) \left(\frac{m^2}{4\pi}\sin^2\frac{\pi}{m}\right)^{d-2} \left(\prod_{k=1}^{d-2}n_k\right)^{-\frac{1}{2}}$ for $(L_m)$ and $c_4(\infty)\left(\frac{\pi}{4}\right)^{d-2}\left(\prod_{k=1}^{d-2}n_k\right)^{-\frac{1}{2}}$ for $(L_\infty)$.
With the help of Theorem~\ref{thm:key1}, we manage to provide an improved randomized approximation algorithm, which can be applied to both $(L_m)$ and $(L_\infty)$.

\begin{algorithm}\label{alg:1}
A polynomial-time randomized algorithm of $(L_m)$ when $m\ge3$ is an integer or $m=\infty$:
\begin{enumerate}
\item Randomly generate $\bxi^k\in\Om_m^{n_k}$ for $k=1,2,\dots,d-2$, where all $\xi^k_i$'s are i.i.d.\ uniformly on $\Om_m$;
\item Apply the approximation algorithm in~\cite{HZ10} to solve the bilinear form optimization problem
$$
\begin{array}{ll}
\max & \re F(\bxi^1,\bxi^2,\dots,\bxi^{d-2},\bx^{d-1},\bx^d ) \\
\st &  \bx^{d-1}\in \Om_m^{n_{d-1}}, \, \bx^d \in \Om_m^{n_d},
\end{array}
$$
and get its approximate solution $(\bxi^{d-1},\bxi^d)$;
\item Compute an objective value $\re F(\bxi^1,\bxi^2,\dots,\bxi^d )$;
\item Repeat the above procedures independently $\ln\frac{1}{\epsilon} \left(\frac{c_2(m)}{c_1(\delta)}\right)^{d-2} \prod_{k=1}^{d-2} n_k^{5\delta}$ times for any given $\epsilon>0$ and $\delta \in (0,\frac{1}{16})$, and choose a solution with the largest objective value.
\end{enumerate}
\end{algorithm}

\begin{theorem}\label{thm:lm}
If $m\ge3$ is an integer or $m=\infty$, Algorithm~\ref{alg:1} solves $(L_{m})$ with an approximation ratio $c_4(m)\delta^{\frac{d-2}{2}}\left(\prod_{k=1}^{d-2}\frac{\ln n_k}{n_k}\right)^{\frac{1}{2}}$, i.e., for any given $\epsilon>0$ and $\delta \in (0,\frac{1}{16})$, a feasible solution $(\by^1,\by^2,\dots,\by^d)$ can be generated in polynomial time with probability at least $1-\epsilon$, such that $$\re F(\by^1,\by^2,\dots, \by^d )\ge c_4(m)\delta^{\frac{d-2}{2}}\left(\prod_{k=1}^{d-2}\frac{\ln n_k}{n_k}\right)^{\frac{1}{2}} v_{\max}(L_m).$$
\end{theorem}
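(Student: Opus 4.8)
The plan is to analyze Algorithm~\ref{alg:1} by conditioning on the randomly sampled vectors $\bxi^1,\dots,\bxi^{d-2}$ and tracking how much of the optimal value $v_{\max}(L_m)$ survives each sampling step. Let $(\bx^1_*,\dots,\bx^d_*)$ be an optimal solution of $(L_m)$, so $\re F(\bx^1_*,\dots,\bx^d_*) = v_{\max}(L_m)$. The idea is to show that, with the stated per-trial probability, the partially substituted tensor $F(\bxi^1,\dots,\bxi^{d-2},\cdot,\cdot)$ still has a large bilinear optimal value, at which point Step~2 (invoking the Huang--Zhang algorithm with ratio $c_4(m)$) recovers a constant fraction of it.

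The core step is the single-mode sampling estimate. Fix any vectors $\bx^2,\dots,\bx^d$ (thought of as the current optimizer in the remaining modes) and write $F(\bxi^1,\bx^2,\dots,\bx^d) = \sum_{i_1=1}^{n_1} \xi^1_{i_1}\, a_{i_1}$ where $a_{i_1} := F(\be_{i_1},\bx^2,\dots,\bx^d)$ collects the mode-1 slices. Then $\re F(\bxi^1,\bx^2,\dots,\bx^d) = \re(\ba^\T\bxi^1)$ with $\ba = (a_{i_1})$, and by Theorem~\ref{thm:key1}, for any $\delta\in(0,\frac{1}{16})$,
$$
\Prob\left\{\re(\ba^\T\bxi^1) \ge \sqrt{\tfrac{\delta\ln n_1}{n_1}}\,\|\ba\|_1\right\} \ge \frac{c_1(\delta)}{c_2(m)\,n_1^{5\delta}}.
$$
The point is that $\|\ba\|_1 = \sum_{i_1}|F(\be_{i_1},\bx^2,\dots,\bx^d)| \ge \left|\sum_{i_1} \bar{u}_{i_1} F(\be_{i_1},\bx^2,\dots,\bx^d)\right|$ for \emph{any} $\bu\in\Om_m^{n_1}$ (or $\Om_\infty^{n_1}$), in particular for a $\bu$ achieving the mode-1 maximum; hence $\|\ba\|_1 \ge v_{\max}$ of the $(d-1)$-linear problem obtained by freezing modes $2,\dots,d$ at $\bx^2,\dots,\bx^d$ and maximizing over mode 1. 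Applying this recursively to modes $1,2,\dots,d-2$: starting from the optimal $(\bx^1_*,\dots,\bx^d_*)$, at stage $k$ we bound the surviving optimal value of the freshly reduced $(d-k)$-linear form by $\sqrt{\delta\ln n_k/n_k}$ times the previous one, each with independent success probability $\ge c_1(\delta)/(c_2(m)n_k^{5\delta})$. After $d-2$ successful stages, the residual bilinear form $\re F(\bxi^1,\dots,\bxi^{d-2},\cdot,\cdot)$ over $\Om_m^{n_{d-1}}\times\Om_m^{n_d}$ has optimal value at least $\delta^{(d-2)/2}\left(\prod_{k=1}^{d-2}\frac{\ln n_k}{n_k}\right)^{1/2} v_{\max}(L_m)$, and Step~2 returns a solution within a factor $c_4(m)$ of that.

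The probability bookkeeping is then routine: the $d-2$ stages are independent, so one trial succeeds with probability at least $\prod_{k=1}^{d-2}\frac{c_1(\delta)}{c_2(m)n_k^{5\delta}} = \left(\frac{c_1(\delta)}{c_2(m)}\right)^{d-2}\prod_{k=1}^{d-2}n_k^{-5\delta}$, which is exactly the reciprocal of the repetition count (up to the $\ln\frac1\epsilon$ factor) in Step~4; a standard "one minus (failure)$^N$" computation with $N = \ln\frac1\epsilon \cdot (\text{reciprocal})$ gives overall success probability at least $1-\epsilon$. Polynomial running time holds since $d$ is fixed, each $n_k^{5\delta} \le n_k$, and the Huang--Zhang subroutine runs in polynomial time. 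Finally one must note $v_{\max}(L_m)\ge 0$ (take $\bx^k$ and $-\bx^k$, or rotate by $-1\in\Om_m$, so the objective is symmetric and the max is nonnegative), so the inequality $\re F(\by^1,\dots,\by^d)\ge c_4(m)\delta^{(d-2)/2}(\cdots)^{1/2} v_{\max}(L_m)$ is a genuine approximation guarantee.

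The main obstacle is making the recursive slicing argument precise: one must verify that "freeze modes $2,\dots,d$ at their current optimizers, maximize over mode 1, then $\|\ba\|_1$ dominates that value" can be chained so that the product of the surviving factors telescopes correctly against the \emph{original} $v_{\max}(L_m)$ rather than against intermediate quantities whose optimizers keep changing. The clean way is to fix the single optimal tuple $(\bx^1_*,\dots,\bx^d_*)$ at the outset, peel off mode $1$ using $\|\ba\|_1 \ge \big|\sum_{i_1}\bar{x}^1_{*,i_1} F(\be_{i_1},\bx^2_*,\dots,\bx^d_*)\big| = v_{\max}(L_m)$ (when $\bx^1_*\in\Om_m^{n_1}$; on $\Om_\infty^{n_k}$ the same holds), and then repeat with $\bx^2_*$ in the reduced form, etc.; each step only ever compares to the fixed optimal objective, so no re-optimization drift occurs. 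The Berry--Esseen-type control inside Theorem~\ref{thm:key1} is already packaged, so nothing further is needed there.
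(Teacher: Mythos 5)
Your proposal is correct and follows essentially the same route as the paper's proof: peel off the first $d-2$ modes one at a time, at each step applying Theorem~\ref{thm:key1} to the slice vector $\ba$ together with the bound $\|\ba\|_1\ge |F(\ldots,\bu,\ldots)|$ for unit-modulus $\bu$, then finish with the Huang--Zhang bilinear subroutine and amplify the per-trial success probability $\left(\frac{c_1(\delta)}{c_2(m)}\right)^{d-2}\prod_k n_k^{-5\delta}$ by independent repetition. The only difference is organizational: you chain the inequalities forward against a single fixed optimal tuple, whereas the paper runs a backward induction on the subproblems $(F_t)$ and compares to the intermediate optima $v_{\max}(F_{t-1})$; both resolve the ``telescoping'' concern you raise in the same way.
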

\begin{proof}
Suppose $(\bxi^1,\bxi^2,\dots,\bxi^d)$ is an approximate solution generated by the first three steps of Algorithm~\ref{alg:1}, i.e., without repeated sampling and choosing the best one. For any $t~(2\le t\le d)$, we treat $(\bxi^1,\bxi^2,\dots,\bxi^{d-t})$ as given parameters and define the following problem
\[
\begin{array}{lll}
  (F_t)   & \max   & \re F(\bxi^1,\bxi^2,\dots,\bxi^{d-t},\bx^{d-t+1},\bx^{d-t+2}\dots,\bx^d)\\
       & \st & \bx^k\in\Om_m^{n_k},\,k=d-t+1,d-t+2,\dots,d.
\end{array}
\]
By applying the first three steps of Algorithm~\ref{alg:1} to $(F_t)$, we get a randomly generated feasible solution $(\bxi^{d-t+1},\bxi^{d-t},\dots,\bxi^d)$ of $(F_t)$ to update the previous $\bxi^k$'s for $d-t+1\le k\le d$. In the remaining, we prove by induction on $t$ that for each $t=2,3,\dots,d$,
\begin{align}
&~~~~\mathop{\Prob}_{(\bxi^{d-t+1},\bxi^{d-t+2},\dots,\bxi^d )}\left\{\re F(\bxi^1,\bxi^2,\dots,\bxi^d)\ge
c_4(m)\delta^{\frac{t-2}{2}} \left(\prod_{k=d-t+1}^{d-2}\frac{\ln n_k}{n_k}\right)^{\frac{1}{2}} v_{\max}(F_t)\right\} \nonumber \\
&\ge \frac{(c_1(\delta))^{t-2}}{(c_2(m))^{t-2}\prod_{k=d-t+1}^{d-2} n_k^{5\delta}}. \label{eq:lmproof}
\end{align}
In other words, $(\bxi^{d-t+1},\bxi^{d-t+2},\dots,\bxi^d)$ is a $c_4(m)\delta^{\frac{t-2}{2}} \left(\prod_{k=d-t+1}^{d-2}\frac{\ln n_k}{n_k}\right)^{\frac{1}{2}}$-approximate solution of $(F_t)$ with a nontrivial probability.

For the base case $t=2$, the algorithm by Huang and Zhang~\cite{HZ10} (the second step of Algorithm~\ref{alg:1}) guarantees a constant ratio $c_4(m)$, i.e., $\re F(\bxi^1,\bxi^2,\dots,\bxi^d) \ge c_4(m)\,v_{\max}(F_2)$, implying~\eqref{eq:lmproof}. Suppose now~\eqref{eq:lmproof} holds for $t-1$. To prove that~\eqref{eq:lmproof} holds for $t$, we notice that $(\bxi^1,\bxi^2,\dots,\bxi^{d-t})$ are given fixed parameters. Denote $(\bz^{d-t+1},\bz^{d-t+2},\dots,\bz^d)$ to be an optimal solution of $(F_t)$, and define the following two events
\begin{align*}
\Phi_1&=\left\{\bxi^{d-t+1} \in \Om_m^{n_{d-t+1}} : \re F(\bxi^1,\dots,\bxi^{d-t},\bxi^{d-t+1},\bz^{d-t+2},\dots,\bz^d)\ge  \sqrt{\frac{\delta\ln n_{d-t+1}}{n_{d-t+1}}} v_{\max}(F_t) \right\}, \\
\Phi_2&=\Bigg\{\bxi^{d-t+1}\in \Phi_1,\bxi^{d-t+2}\in\Om_m^{n_{d-t+2}},\dots,\bxi^d \in\Om_m^{n_d} :\\
& ~~~~\re F(\bxi^1,\bxi^2,\dots,\bxi^d)\ge c_4(m)\delta^{\frac{t-3}{2}}\left(\prod_{k=d-t+2}^{d-2}\frac{\ln n_k}{n_k}\right)^{\frac{1}{2}}\re F(\bxi^1,\dots,\bxi^{d-t},\bxi^{d-t+1},\bz^{d-t+2},\dots,\bz^d)
\Bigg\}.
\end{align*}
Clearly we have
\begin{align}
&~~~~\mathop{\Prob}_{(\bxi^{d-t+1},\bxi^{d-t+2},\dots,\bxi^d)}\left\{\re F(\bxi^1,\bxi^2,\dots,\bxi^d)\ge
c_4(m) \delta^{\frac{t-2}{2}} \left(\prod_{k=d-t+1}^{d-2}\frac{\ln n_k}{n_k}\right)^{\frac{1}{2}} v_{\max}(F_t)\right\}\nonumber\\
&\ge\mathop{\Prob}_{(\bxi^{d-t+1},\bxi^{d-t+2},\dots,\bxi^d)}\left\{(\bxi^{d-t+1},\bxi^{d-t+2},\dots,\bxi^d)\in \Phi_2\Big{|}\,\bxi^{d-t+1} \in \Phi_1 \right\}\cdot \mathop{\Prob}_{\bxi^{d-t+1}}\left\{ \bxi^{d-t+1} \in \Phi_1 \right\}.\label{eq:star}
\end{align}
To lower bound~\eqref{eq:star}, first we notice that $(\bz^{d-t+2},\bz^{d-t+3},\dots,\bz^d)$ is a feasible solution of $(F_{t-1})$, and so $\re F(\bxi^1,\dots,\bxi^{d-t},\bxi^{d-t+1},\bz^{d-t+2},\dots,\bz^d)\le v_{\max}(F_{t-1})$, which leads to
\begin{align*}
&~~~~\mathop{\Prob}_{(\bxi^{d-t+1},\bxi^{d-t+2},\dots,\bxi^d)}\left\{(\bxi^{d-t+1},\bxi^{d-t+2},\dots,\bxi^d)\in \Phi_2 \, \Big{|}\,\bxi^{d-t+1} \in \Phi_1 \right\}\\
&\ge\mathop{\Prob}_{(\bxi^{d-t+1},\bxi^{d-t+2},\dots,\bxi^d)}\left\{\re F(\bxi^1,\bxi^2,\dots,\bxi^d)\ge  c_4(m)\delta^{\frac{t-3}{2}}\left(\prod_{k=d-t+2}^{d-2}\frac{\ln n_k}{n_k}\right)^{\frac{1}{2}} v_{\max}(F_{t-1})\, \Bigg{|}\,\bxi^{d-t+1 } \in \Phi_1\right\}\\
&\ge \frac{(c_1(\delta))^{t-3}}{(c_2(m))^{t-3}\prod_{k=d-t+2}^{d-2} n_k^{5\delta}},
\end{align*}
where the last inequality is due to the induction assumption on $t-1$. Second, we have
\begin{align*}
&~~~~\mathop{\Prob}_{\bxi^{d-t+1}}\left\{ \bxi^{d-t+1} \in \Phi_1 \right\}\\
&=\mathop{\Prob}_{\bxi^{d-t+1}}\left\{\re F(\bxi^1,\dots,\bxi^{d-t+1},\bz^{d-t+2},\dots,\bz^d)\ge \sqrt{\frac{\delta \ln n_{d-t+1}}{n_{d-t+1}}} \re F(\bxi^1,\dots,\bxi^{d-t},\bz^{d-t+1},\dots,\bz^d)\right\}\\
&\ge \mathop{\Prob}_{\bxi^{d-t+1}}\left\{\re F(\bxi^1,\dots,\bxi^{d-t+1},\bz^{d-t+2},\dots,\bz^d)\ge  \sqrt{\frac{\delta \ln n_{d-t+1}}{n_{d-t+1}}} \left\|F(\bxi^1,\dots,\bxi^{d-t},\bullet,\bz^{d-t+2},\dots,\bz^d)\right\|_1\right\}\\
&\ge \frac{c_1(\delta)}{c_2(m)n_{d-t+1}^{5\delta}},
\end{align*}
where first inequality is due to the fact that
$$
\re F(\bxi^1,\dots,\bxi^{d-t},\bz^{d-t+1},\bz^{d-t+2},\dots,\bz^d)
\le \left\|F(\bxi^1,\dots,\bxi^{d-t},\bullet,\bz^{d-t+2},\dots,\bz^d)\right\|_1
$$
and the last inequality is due to Theorem~\ref{thm:key1}. With the above two bounds, we can lower bound the right hand side of~\eqref{eq:star}, and conclude
\begin{align*}
&~~~~\mathop{\Prob}_{(\bxi^{d-t+1},\bxi^{d-t+2},\dots,\bxi^d)}\left\{\re F(\bxi^1,\bxi^2,\dots,\bxi^d)\ge
c_4(m)\delta^{\frac{t-2}{2}}\left( \prod_{k=d-t+1}^{d-2}\frac{\ln n_k}{n_k}\right)^{\frac{1}{2}} v_{\max}(F_t)\right\} \\
&\ge \frac{(c_1(\delta))^{t-3}}{(c_2(m))^{t-3}\prod_{k=d-t+2}^{d-2} n_k^{5\delta}} \cdot \frac{c_1(\delta)}{c_2(m)n_{d-t+1}^{5\delta}} =  \frac{(c_1(\delta))^{t-2}}{(c_2(m))^{t-2}\prod_{k=d-t+1}^{d-2} n_k^{5\delta}}.
\end{align*}

Since $(F_d)$ is exactly $(L_m)$, the first three steps of Algorithm~\ref{alg:1} can generate an approximate solution of $(L_m)$ with approximation ratio $c_4(m)\delta^{\frac{d-2}{2}}\left(\prod_{k=1}^{d-2}\frac{\ln n_k}{n_k}\right)^{\frac{1}{2}}$ and with probability at least $\frac{(c_1(\delta))^{d-2}}{(c_2(m))^{d-2}\prod_{k=1}^{d-2} n_k^{5\delta}}:=\theta$. By applying the last step of Algorithm~\ref{alg:1}, if we independently draw
$$
\ln\frac{1}{\epsilon}\left(\frac{c_2(m)}{c_1(\delta)}\right)^{d-2} \prod_{k=1}^{d-2} n_k^{5\delta}
=\frac{\ln\frac{1}{\epsilon}}{\theta}
$$
trials and choose a solution with the largest objective value, then the probability of success is at least $1-(1-\theta)^{\frac{\ln\frac{1}{\epsilon}}{\theta}}\ge1-\epsilon$.
\end{proof}

\subsection{Multilinear form with the spherical constraint}\label{sec:mform3}

Let us now consider the model $(L_S)$. This problem is also known to be the largest singular value of a high order complex tensor~\cite{L05}. When the order of a tensor, $d=2$, $(L_S)$ is to compute the largest singular value of a complex matrix, which can be done in polynomial-time via the singular value decomposition. For general order $d$, Jiang et al.~\cite{JLZ14} introduced a deterministic polynomial-time algorithm with approximation ratio $\left(\prod_{k=1}^{d-2}n_k\right)^{-\frac{1}{2}}$ via tensor relaxation, a complex extension of the method proposed in~\cite{HLZ10}. With the help of Theorem~\ref{thm:key2}, we now propose a random sampling based polynomial-time algorithm with improved approximation ratio, comparable to Algorithm~\ref{alg:1}.

\begin{algorithm}\label{alg:3}
A polynomial-time randomized algorithm of $(L_S)$:
\begin{enumerate}
\item Randomly and independently generate $\bxi^k$ uniformly on $\BS^{n_k}$ for $k=1,2,\dots,d-2$;
\item Find the left singular vector $\bxi^{d-1}\in\BS^{n_{d-1}}$ and the right singular vector $\bxi^d\in\BS^{n_d}$ corresponding to the largest singular value of the complex matrix $F(\bxi^1,\bxi^2,\dots,\bxi^{d-2},\bullet,\bullet)$, i.e., obtain an optimal solution $(\bxi^{d-1},\bxi^d)$ of the bilinear form optimization problem
    $$
    \begin{array}{ll}
    \max & \re F(\bxi^1,\bxi^2,\dots,\bxi^{d-2},\bx^{d-1},\bx^d ) \\
    \st &  \bx^{d-1}\in \BS^{n_{d-1}}, \, \bx^d \in \BS^{n_d};
    \end{array}
    $$
\item Compute an objective value $\re F(\bxi^1,\bxi^2,\dots,\bxi^d )$;
\item Repeat the above procedures independently $\frac{\ln \frac{1}{\epsilon}}{(c_3(\gamma))^{d-2}} \prod_{k=1}^{d-2} n_k^{2\gamma}\sqrt{\ln n_k}$ times for any given $\epsilon>0$ and $\gamma \in (0,\frac{n_1}{\ln n_1})$, and choose a solution with the largest objective value.
\end{enumerate}
\end{algorithm}

We have the following approximation result for the problem $(L_S)$, which improves the approximation ratio studied in~\cite{JLZ14}. Its proof is similar to that of Theorem~\ref{thm:lm} by applying a recursive procedure. The main difference to that of Theorem~\ref{thm:lm} is that the probability bound in Theorem~\ref{thm:key2} replaces the one in Theorem~\ref{thm:key1}. We left the exercise to interested readers.

\begin{theorem}\label{theo:mulsph}
Algorithm~\ref{alg:3} solves $(L_{S})$ with an approximation ratio $\gamma^{\frac{d-2}{2}} \left(\prod_{k=1}^{d-2}\frac{\ln n_k}{n_k}\right)^{\frac{1}{2}}$, i.e., for any given $\epsilon>0$ and $\gamma \in (0,\frac{n_1}{\ln n_1})$, a feasible solution $(\by^1,\by^2,\dots,\by^d)$ can be generated in polynomial time with probability at least $1-\epsilon$, such that $$\re F(\by^1,\by^2,\dots, \by^d )\ge \gamma^{\frac{d-2}{2}} \left(\prod_{k=1}^{d-2}\frac{\ln n_k}{n_k}\right)^{\frac{1}{2}} v_{\max}(L_S).$$
\end{theorem}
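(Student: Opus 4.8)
The plan is to reproduce the recursive argument behind Theorem~\ref{thm:lm}, making two substitutions: Theorem~\ref{thm:key2} replaces Theorem~\ref{thm:key1} (so $\gamma$, $c_3(\gamma)$, and the factor $n^{2\gamma}\sqrt{\ln n}$ take the roles of $\delta$, $c_1(\delta)/c_2(m)$, and $n^{5\delta}$), and the $L_2$-norm replaces the $L_1$-norm when majorizing partial objective values. Fix one run $(\bxi^1,\dots,\bxi^d)$ of Steps~1--3 of Algorithm~\ref{alg:3} and, for $t=2,\dots,d$, introduce the subproblem
\[
\begin{array}{lll}
(F_t) & \max & \re F(\bxi^1,\dots,\bxi^{d-t},\bx^{d-t+1},\dots,\bx^d)\\
 & \st & \bx^k\in\BS^{n_k},\ k=d-t+1,\dots,d,
\end{array}
\]
with $\bxi^1,\dots,\bxi^{d-t}$ regarded as the already-sampled parameters. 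I would prove by induction on $t$ that
\[
\mathop{\Prob}_{(\bxi^{d-t+1},\dots,\bxi^d)}\Big\{\re F(\bxi^1,\dots,\bxi^d)\ge \gamma^{\frac{t-2}{2}}\big(\textstyle\prod_{k=d-t+1}^{d-2}\tfrac{\ln n_k}{n_k}\big)^{1/2} v_{\max}(F_t)\Big\}\ \ge\ \frac{(c_3(\gamma))^{t-2}}{\prod_{k=d-t+1}^{d-2}n_k^{2\gamma}\sqrt{\ln n_k}}.
\]
The base case $t=2$ is exact, since Step~2 extracts the largest singular value of the matrix $F(\bxi^1,\dots,\bxi^{d-2},\bullet,\bullet)$, giving $\re F(\bxi^1,\dots,\bxi^d)=v_{\max}(F_2)$ with probability $1$.

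For the inductive step, let $(\bz^{d-t+1},\dots,\bz^d)$ be optimal for $(F_t)$, and let $\ba:=F(\bxi^1,\dots,\bxi^{d-t},\bullet,\bz^{d-t+2},\dots,\bz^d)\in\C^{n_{d-t+1}}$ be the coefficient vector obtained by freezing every argument of $F$ except the $(d-t+1)$-st (legitimate because $F$ carries no conjugates). Since $\|\bz^{d-t+1}\|_2=1$, Cauchy--Schwarz yields $v_{\max}(F_t)=\re(\ba^{\T}\bz^{d-t+1})\le\|\ba\|_2$. Now sample $\bxi^{d-t+1}$ uniformly on $\BS^{n_{d-t+1}}$; because $\gamma\ln n_{d-t+1}<n_{d-t+1}$ (admissible at every level since $\gamma<n_1/\ln n_1$ and $n_1\le n_{d-t+1}$), Theorem~\ref{thm:key2} applied to this $\ba$ gives
\[
\Prob\Big\{\re F(\bxi^1,\dots,\bxi^{d-t+1},\bz^{d-t+2},\dots,\bz^d)\ge \sqrt{\tfrac{\gamma\ln n_{d-t+1}}{n_{d-t+1}}}\,\|\ba\|_2\Big\}\ \ge\ \frac{c_3(\gamma)}{n_{d-t+1}^{2\gamma}\sqrt{\ln n_{d-t+1}}},
\]
and since $\|\ba\|_2\ge v_{\max}(F_t)$, the event $\Phi_1$ that $\re F(\bxi^1,\dots,\bxi^{d-t+1},\bz^{d-t+2},\dots,\bz^d)\ge \sqrt{\gamma\ln n_{d-t+1}/n_{d-t+1}}\,v_{\max}(F_t)$ has probability at least $c_3(\gamma)/(n_{d-t+1}^{2\gamma}\sqrt{\ln n_{d-t+1}})$.

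Next, $(\bz^{d-t+2},\dots,\bz^d)$ is feasible for $(F_{t-1})$ with parameters $\bxi^1,\dots,\bxi^{d-t+1}$, so $v_{\max}(F_{t-1})\ge \re F(\bxi^1,\dots,\bxi^{d-t+1},\bz^{d-t+2},\dots,\bz^d)$; conditioning on $\Phi_1$ and invoking the induction hypothesis for $t-1$ (which is a statement about the randomness of $\bxi^{d-t+2},\dots,\bxi^d$ alone, hence valid for the fixed parameters comprising any point of $\Phi_1$), with conditional probability at least $(c_3(\gamma))^{t-3}/\prod_{k=d-t+2}^{d-2}n_k^{2\gamma}\sqrt{\ln n_k}$ the reconstructed $(\bxi^{d-t+2},\dots,\bxi^d)$ satisfies $\re F(\bxi^1,\dots,\bxi^d)\ge \gamma^{\frac{t-3}{2}}\big(\prod_{k=d-t+2}^{d-2}\tfrac{\ln n_k}{n_k}\big)^{1/2}v_{\max}(F_{t-1})$, which is $\ge$ the same ratio times $\re F(\bxi^1,\dots,\bxi^{d-t+1},\bz^{d-t+2},\dots,\bz^d)$. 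Multiplying the two probabilities (via $\Prob\{A\cap B\}=\Prob\{B\mid A\}\Prob\{A\}$) and the two ratios gives the claim for $t$. Taking $t=d$, so $(F_d)=(L_S)$, one run of Steps~1--3 returns a $\gamma^{(d-2)/2}\big(\prod_{k=1}^{d-2}\tfrac{\ln n_k}{n_k}\big)^{1/2}$-approximate solution with probability at least $\theta:=(c_3(\gamma))^{d-2}/\prod_{k=1}^{d-2}n_k^{2\gamma}\sqrt{\ln n_k}$; the $\theta^{-1}\ln(1/\epsilon)$-round loop of Step~4 then boosts the success probability to at least $1-(1-\theta)^{\theta^{-1}\ln(1/\epsilon)}\ge 1-\epsilon$, and for fixed $d,\gamma,\epsilon$ every step runs in polynomial time (an SVD, sampling a point on a sphere, and polynomially many repetitions).

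I do not expect a substantive obstacle, as Theorem~\ref{thm:key2} is the precise spherical analogue of Theorem~\ref{thm:key1} and the recursion mirrors that of Theorem~\ref{thm:lm}. The only points that need a moment's care are: (i) the admissibility $\gamma\ln n_k<n_k$ for all $k\le d-2$, which follows from $\gamma<n_1/\ln n_1$, the ordering $n_1\le\cdots\le n_d$, and monotonicity of $x/\ln x$ in the relevant range; and (ii) replacing the ``unit-modulus coordinates'' bound $\re F\le\|F(\dots,\bullet,\dots)\|_1$ used over $\Om_m^n$ by the ``unit-norm vector'' bound $\re F\le\|F(\dots,\bullet,\dots)\|_2$ over $\BS^n$, supplied by Cauchy--Schwarz. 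Both are routine, so I would be content to leave the full write-up as an exercise paralleling the proof of Theorem~\ref{thm:lm}.
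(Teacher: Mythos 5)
Your proposal is correct and is precisely the argument the paper intends: the paper itself omits the proof, stating only that it parallels Theorem~\ref{thm:lm} with Theorem~\ref{thm:key2} in place of Theorem~\ref{thm:key1}, and you have filled in that recursion faithfully (exact base case via the SVD, the Cauchy--Schwarz bound $v_{\max}(F_t)\le\|\ba\|_2$ replacing the $L_1$ bound, and the same conditioning-and-amplification structure). The only caveat, inherited from the paper's own statement rather than introduced by you, is that $x/\ln x$ is not monotone near $x=e$, so the admissibility $\gamma\ln n_k<n_k$ for all $k$ strictly requires $\gamma<\min_k n_k/\ln n_k$ rather than $\gamma<n_1/\ln n_1$ in the degenerate case $n_1=2$.
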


We remark that the approximation ratio in Theorem~\ref{theo:mulsph} is the same as that of~\cite[Theorem 4.3]{HJLZ14} for the real case of $(L_S)$. For a general complex model $(L_S)$, it is an obvious but tedious way to rewrite $(L_S)$ as a real model by doubling its decision variables, and directly apply the result of~\cite[Theorem 4.3]{HJLZ14} to get an approximation ratio $\gamma^{\frac{d-2}{2}} \left(\prod_{k=1}^{d-2}\frac{\ln 2n_k}{2n_k}\right)^{\frac{1}{2}}$, which is obviously worse than that of Theorem~\ref{theo:mulsph} where a complex random sampling approach is applied directly.

\section{General conjugate form optimization}\label{sec:gform}

With all the preparations ready, we are now able to study general conjugate form optimization models,
$$
\begin{array}{lll}
(G_m) & \max & g(\bx) \\
             & \st & \bx \in \Om_m^n; \\
(G_\infty) & \max  & g(\bx) \\
           & \st & \bx \in \Om_\infty^n; \\
(G_S) & \max      & g(\bx) \\
      & \st & \bx \in \BS^n.
\end{array}
$$
In the above models,
\begin{equation}\label{eq:gmultilinear}
  g(\bx)=G\bigg(\underbrace{\binom{\ov\bx}{\bx},\binom{\ov\bx}{\bx},\dots,\binom{\ov\bx}{\bx}}_d\bigg)
\end{equation}
is a real-valued general conjugate form of $\bx\in\C^n$ associated with a conjugate super-symmetric tensor $\G\in\C^{(2n)^d}$.

These complex optimization problems were studied by Jiang et al.~\cite{JLZ14} for a special class of $g(\bx)$ where the number of conjugate variables is equal to the number of usual variables in every monomial, i.e., symmetric conjugate forms. When applying the approximation algorithms in this section to this special class of $g(\bx)$, the obtained approximation ratios actually improve that of Jiang et al.~\cite{JLZ14}.

\subsection{Conjugate form in the $m$-th roots of unity or the complex unit circle}\label{sec:gform1}

Due to similarity, we discuss approximation algorithms of $(G_m)$ and $(G_\infty)$ together. First, by noticing~\eqref{eq:gmultilinear} and applying the tensor relaxation method, $(G_m)$ and $(G_\infty)$ can be relaxed to
$$
\begin{array}{lll}
(LG_m) & \max & \re G(\bx^1,\bx^2,\dots,\bx^d) \\
             & \st & \bx^k \in \Om_m^{2n} ,\, k=1,2,\dots,d, \\
(LG_\infty) & \max  & \re G(\bx^1,\bx^2,\dots,\bx^d) \\
           & \st & \bx^k \in \Om_\infty^{2n} ,\, k=1,2,\dots,d,
\end{array}
$$
respectively, which are special cases of $(L_m)$ and $(L_\infty)$ studied in Section~\ref{sec:mulform}, respectivly. Let $m\ge3$ be an integer or $m=\infty$. According to Theorem~\ref{thm:lm}, for any given $\delta \in (0,\frac{1}{16})$, $\bz^1,\bz^2,\dots,\bz^d\in \Om_m^{2n}$ can be generated in polynomial time, such that
$$\re G(\bz^1,\bz^2,\dots, \bz^d)
\ge c_4(m)\left(\frac{\delta\ln (2n)}{2n}\right)^{\frac{d-2}{2}}v_{\max}(LG_m)
\ge c_4(m)\left(\frac{\delta\ln (2n)}{2n}\right)^{\frac{d-2}{2}}v_{\max}(G_m),$$
where the last inequality holds because $(LG_m)$ is a relaxation of $(G_m)$. Let $\bz^k=\binom{\bx^k}{\by^k}$ for $k=1,2,\dots,d$, by the polarization identity in Theorem~\ref{thm:link},
$$
\ex\left[\left(\prod_{i=1}^d \ov{\xi_i}\right)g \left( \sum_{k=1}^d\left(\ov{\xi_k\bx^k}+\xi_k\by^k\right) \right)\right]
=d!\, G\left(\binom{\bx^1}{\by^1},\binom{\bx^2}{\by^2},\dots,\binom{\bx^d}{\by^d}\right)=d!\,G(\bz^1,\bz^2,\dots,\bz^d),
$$
where $\xi_2,\xi_2,\dots,\xi_d$ are i.i.d.\ uniformly on $\Om_m$. By dividing $(2d)^d$ and taking the real part,
\begin{equation} \label{eq:linking2}
\ex\left[\re\left(\prod_{i=1}^d\ov{\xi_i}\right)g \left(\frac{1}{2d} \sum_{k=1}^d\left(\ov{\xi_k\bx^k}+\xi_k\by^k\right) \right)\right]
=\frac{d!}{(2d)^d}\re G(\bz^1,\bz^2,\dots,\bz^d).
\end{equation}
Let us define
\begin{equation} \label{eq:soldef}
  \bu_\xi:=\frac{1}{2d} \sum_{k=1}^d\left(\ov{\xi_k\bx^k}+\xi_k\by^k\right) .
\end{equation}
Therefore,~\eqref{eq:linking2} leads to
\begin{equation} \label{eq:linking3}
\ex\left[\re\left(\prod_{i=1}^d\ov{\xi_i}\right)g (\bu_\xi) \right]
=\frac{d!}{(2d)^d}\re G(\bz^1,\bz^2,\dots,\bz^d) \ge\frac{c_4(m)d!}{(2d)^d}\left(\frac{\delta\ln (2n)}{2n}\right)^{\frac{d-2}{2}}v_{\max}(G_m).
\end{equation}
Observing that $\xi_k$'s and the components of $\bx^k$'s and $\by^k$'s belong to $\Om_m$, every component of $\bu_\xi$ is a convex combination of elements in $\Om_m$ implying that the components of $\bu_\xi$ belong to $\conv(\Om_m)$.

Our next step is to construct a randomized approximate solution of $(L_m)$ from $\bu_\xi$'s. Before randomization and showing its solution quality, we first present some properties of real-valued general conjugate forms.
\begin{proposition}\label{thm:convfeas}
Let $m\ge3$ be an integer or $m=\infty$. Suppose $g(\bx)$ is a real-valued general conjugate form and $\bx \in \C^n$ with $x_i\in\conv(\Om_m)$ for $i=1,2,\dots, n$. \\
(i) If $g(\bx)$ is square-free, i.e., the sum of the powers of $x_i$ and $\ov{x_i}$ is less than two for every $1\le i\le n$ in every monomial, then $\by,\bz\in\Om_m^{n}$ can be found in polynomial time, such that $g(\by)\le g(\bx)\le g(\bz)$.\\
(ii) If $g(\bx)$ is convex, then $\bz\in\Om_m^{n}$ can be found in polynomial time, such that $g(\bx)\le g(\bz)$.
\end{proposition}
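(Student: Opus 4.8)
The plan is to reduce both parts to the following \emph{single-variable} principle: if we fix all but one coordinate $x_i$ of $\bx$, then the general conjugate form $g$ becomes a function of $x_i$ alone (and its conjugate $\ov{x_i}$). I would treat the two parts by analyzing how this restricted function depends on $x_i$ and then sweeping the coordinates one at a time, replacing each $x_i\in\conv(\Om_m)$ by a point of $\Om_m$ without decreasing (for the upper-bound claims) or without increasing (for $g(\by)\le g(\bx)$) the value of $g$. Since there are only $n$ coordinates and each single-coordinate optimization over $\Om_m$ (or over $\Om_\infty$, where one optimizes a trigonometric expression of bounded degree) is solvable in polynomial time, the whole rounding procedure runs in polynomial time, which is exactly what the statement asks for.

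For part (i), the key observation is that square-freeness forces the restricted function $x_i\mapsto g(x_1,\dots,x_i,\dots,x_n)$ to be \emph{affine} in the pair $(x_i,\ov{x_i})$: every monomial contributes either $x_i$, or $\ov{x_i}$, or neither, with the fixed remaining coordinates absorbed into the coefficients. Grouping terms, the restricted function has the form $\alpha x_i+\ov{\alpha}\,\ov{x_i}+\beta=2\re(\alpha x_i)+\beta$ with $\beta\in\R$ (here one uses that $g$ is real-valued, so the coefficient of $\ov{x_i}$ is the conjugate of that of $x_i$, and the constant-in-$x_i$ remainder is itself a real-valued general conjugate form in the other variables hence real when those are fixed to values in $\conv(\Om_m)$... actually one only needs $\beta$ real at the end, which follows because both $g(\bx)$ and $2\re(\alpha x_i)$ are real for real inputs $x_i$). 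Since $x_i\mapsto 2\re(\alpha x_i)$ is a real-linear functional on $\C$, over the convex set $\conv(\Om_m)$ it attains its maximum and minimum at extreme points, i.e., at elements of $\Om_m$ (when $m=\infty$, at points of $\Om_\infty$). So I replace $x_i$ by an extreme-point value maximizing $g$ to build $\bz$, and by one minimizing $g$ to build $\by$; after sweeping all coordinates once, $\by,\bz\in\Om_m^n$ and $g(\by)\le g(\bx)\le g(\bz)$. One subtlety to handle carefully: after I round coordinate $i$, the coefficients $\alpha,\beta$ seen by coordinate $i+1$ change, but the chain of inequalities $g(\by)\le\dots\le g(\bx)\le\dots\le g(\bz)$ is preserved because at every step I only move in the favorable direction; this should be spelled out as a clean induction on the number of rounded coordinates.

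For part (ii), square-freeness is no longer available, so the restricted function $x_i\mapsto g(\bx)$ need not be affine; instead I use convexity of $g$ as a function on (a real-linear copy of) $\C^n\cong\R^{2n}$. The point is that the restriction of a convex function to the line segment... more precisely, I want to push $\bx$ to a vertex of the polytope $\conv(\Om_m)^n$ (equivalently $\Om_m^n$) without decreasing $g$. This is the standard fact that a convex function on a polytope attains its maximum at a vertex, applied coordinatewise: fixing all coordinates but $x_i$, the map $x_i\mapsto g(\bx)$ is convex on the $2$-dimensional convex set $\conv(\Om_m)$, hence is maximized at an extreme point of that set, i.e., a point of $\Om_m$. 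Sweeping $i=1,\dots,n$ and always replacing $x_i$ by a maximizing extreme point produces $\bz\in\Om_m^n$ with $g(\bx)\le g(\bz)$; polynomial-time solvability of each step again follows from the bounded degree of the univariate (real two-variable) convex optimization. For $m=\infty$ the same argument works with "extreme point of $\conv(\Om_\infty)$ = point of the unit circle."

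The main obstacle is not the sweeping/induction structure — that is routine once the per-coordinate statement is in hand — but making the per-coordinate optimization genuinely polynomial-time and correctly justifying the \emph{convex} case. For part (i) the per-coordinate problem is a linear functional on a disk or a finite set, which is trivial. For part (ii), one must argue that the restriction to a single complex coordinate of a convex conjugate form is convex (immediate, since it is the restriction of a convex function to an affine subspace) and, more importantly, that maximizing a convex polynomial of bounded degree over $\conv(\Om_m)$ (a regular $m$-gon) or over the closed unit disk $\conv(\Om_\infty)$ can be done in polynomial time — for the $m$-gon this is just evaluation at $m$ vertices; for the disk, maximizing a convex function over a convex set is attained on the boundary circle, reducing to a one-variable trigonometric polynomial of degree $\le d$, whose maximum can be found in polynomial time (for fixed $d$). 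Getting these reductions stated cleanly, and being careful that "convex" is with respect to the real $2n$-dimensional structure (not any complex notion), is where the real care is needed.
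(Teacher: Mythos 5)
Your proposal is correct and follows essentially the same route as the paper: fix all coordinates but one, observe that square-freeness plus real-valuedness makes the restricted function $2\re(\alpha x_i)+\beta$ with $\beta\in\R$ (respectively, that convexity is inherited by the restriction), optimize over the extreme points of $\conv(\Om_m)$, and sweep the coordinates. Your extra care about the per-coordinate step when $m=\infty$ in the convex case (maximizing over the boundary circle via a bounded-degree trigonometric polynomial) addresses a detail the paper leaves implicit, but the argument is the same.
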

\begin{proof}
If $g(\bx)$ is square-free, by fixing $x_2, x_3,\dots,x_n$ as constants and taking $x_1$ as the only variable, we may write
$$g(\bx)= x_1p_1(x_2,x_3,\dots,x_n) + \ov{x_1}p_2(x_2,x_3,\dots,x_n) + p_3(x_2,x_3,\dots,x_n):=p(x_1).$$
As $p(x_1)=g(\bx)$ is real-valued, $p_3(x_2,x_3,\dots,x_n)\in\R$ and $p_2(x_2,x_3,\dots,x_n)=\ov{p_1(x_2,x_3,\dots,x_n)}$, and we have
\begin{align*}
  p(x_1) &= x_1p_1(x_2,x_3,\dots,x_n) + \ov{x_1p_1(x_2,x_3,\dots,x_n)} + p_3(x_2,x_3,\dots,x_n)\\
   &= 2\,\re(x_1p_1(x_2,x_3,\dots,x_n)) + p_3(x_2,x_3,\dots,x_n).
\end{align*}
Therefore, $p(x_1)$ is a linear function of $x_1$, whose optimal value over $\conv(\Om_m)$ is attained at one of its vertices, i.e., $z_1\in\Om_m$ can be found easily such that $p(z_1)\ge p(x_1)$. Now, repeat the same procedures for $x_2,x_3,\dots, x_n$, and let them be replaced by $z_2, z_3,\dots, z_n$, respectively. Then $\bz\in\Om_m^n$ satisfies $g(\bz) \ge g(\bx)$. Using the same argument, we may find $\by\in\Om_m^n$ in polynomial time, such that $g(\by) \le  g(\bx)$. The case that $g(\bx)$ is convex can be proven similarly.
\end{proof}

\begin{proposition}\label{thm:convexlemma}
  If a real-valued general conjugate form is convex, then it is nonnegative.
\end{proposition}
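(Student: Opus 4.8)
The plan is to exploit two elementary structural features of a real-valued general conjugate form $g$: it vanishes at the origin, and it is homogeneous of degree $d$ under \emph{real} scaling, i.e.\ $g(t\bx)=t^d g(\bx)$ for every $t\in\R$ and every $\bx\in\C^n$. The first fact is immediate because every monomial in~\eqref{eq:gform} has total degree $d\ge 2$ in the variables $\bx,\ov\bx$. The second follows by inspecting a single monomial carrying $k$ conjugated and $d-k$ plain variables: replacing $\bx$ by $t\bx$ with $t\in\R$ multiplies it by $\ov{t}^{\,k}\,t^{d-k}=t^d$, using $\ov t=t$.

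First I would fix an arbitrary $\bx\in\C^n$ and restrict $g$ to the real line $\{t\bx:t\in\R\}$ through $\mathbf 0$ and $\bx$. Since $g$ is convex on $\C^n\cong\R^{2n}$ and $t\mapsto t\bx$ is affine, the scalar function $\varphi(t):=g(t\bx)=t^d g(\bx)$ is convex on $\R$. Applying the definition of convexity on the segment joining $\mathbf 0$ to $\bx$, for every $t\in[0,1]$ one obtains $\varphi(t)=g\bigl(t\bx+(1-t)\mathbf 0\bigr)\le t\,g(\bx)+(1-t)\,g(\mathbf 0)=t\,g(\bx)$, that is $t^d g(\bx)\le t\,g(\bx)$, equivalently $t\,(t^{d-1}-1)\,g(\bx)\le 0$. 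Choosing any $t\in(0,1)$, here $t>0$ and (as $d\ge 2$) $t^{d-1}-1<0$, so this forces $g(\bx)\ge 0$. Since $\bx$ was arbitrary, $g$ is nonnegative. One could also argue more crudely: if $g(\bx)<0$ for some $\bx$ then $\varphi(t)=t^d g(\bx)$ has $\varphi''(t)=d(d-1)t^{d-2}g(\bx)$, which is $\le 0$ for all $t\ge 0$ and strictly negative somewhere, contradicting convexity of $\varphi$; this variant also covers the degenerate possibility $d=1$ only at the cost of noting that degree-one real-valued general conjugate forms are $\R$-linear functionals (convex but not in general nonnegative), so the statement is naturally read for $d\ge 2$, the range relevant to the models here.

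I do not anticipate any substantial obstacle; the proof is short once the two observations (vanishing at $\mathbf 0$ and degree-$d$ real homogeneity) are recorded. The only place that deserves a line of care is the homogeneity computation: one must track the conjugation so as to see that the exponent is exactly $d$ for \emph{every} monomial, and in particular that the scaling parameter should be taken real rather than complex, since a complex phase would rescale conjugated and unconjugated factors by different powers. A route through the Hessian characterization of convexity (positive semidefinite Hessian, then specialization along rays) is available but amounts to strictly more computation than the convexity-definition argument above, so I would not pursue it.
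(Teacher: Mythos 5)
Your proof is correct, and it takes a more elementary route than the paper's. The paper works through the tensor representation $g(\bx)=G\bigl(\binom{\ov\bx}{\bx},\dots,\binom{\ov\bx}{\bx}\bigr)$, differentiates $p_{\bx,\by}(t)=g(\bx+t\by)$ twice using multilinearity and symmetry of $\G$, and concludes from the second-order characterization of convexity that $p^{\prime\prime}_{\bx,\bx}(0)=d(d-1)\,g(\bx)\ge0$. You instead use only two facts read off directly from the monomial expansion---$g(\mathbf{0})=0$ and $g(t\bx)=t^d g(\bx)$ for real $t$---and then apply the zeroth-order secant inequality on the segment from $\mathbf{0}$ to $\bx$, getting $t^d g(\bx)\le t\,g(\bx)$ for $t\in(0,1)$ and hence $g(\bx)\ge0$. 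Both arguments ultimately rest on the same observation, that the restriction of $g$ to the ray through $\bx$ is $t\mapsto t^d g(\bx)$ and must be convex; yours finishes without the tensor machinery or any differentiation, and it is careful about the one point that genuinely needs care, namely that the scaling parameter must be real so that $\ov{t}^{\,k}t^{d-k}=t^d$ for every monomial regardless of how many factors are conjugated. Your caveat about $d=1$ is also well taken: a degree-one real-valued conjugate form is $2\re(\ba^{\T}\bx)$, convex but not nonnegative, so the proposition is implicitly read for $d\ge2$; note the paper's own proof is likewise vacuous at $d=1$ since $d(d-1)=0$ there.
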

\begin{proof}
Let the real-valued general conjugate form be $g(\bx)$ associated with a conjugate super-symmetric tensor $\G\in\C^{(2n)^d}$. Define $p_{\bx,\by}: \R\to\R$ where $p_{\bx,\by}(t)= g(\bx+t\by)$. Since $g(\bx)$ is convex, it is well know in convex analysis that $p_{\bx,\by}(t)$ is a convex function of $t\in\R$ for all $\bx,\by\in\C^n$. From the tensor representation~\eqref{eq:gmultilinear},
$$
p_{\bx,\by}(t)=g(\bx+t\by)= G\bigg( \underbrace{ \binom{\ov{\bx+t\by}}{\bx+t\by}, \binom{\ov{\bx+t\by}}{\bx+t\by}, \dots, \binom{\ov{\bx+t\by}}{\bx+t\by}}_d \bigg).
$$
Since $\G$ is symmetric, direct computation shows that
$$
p^{\prime}_{\bx,\by}(t)=d\,G\bigg(\binom{\ov\by}{\by}, \underbrace{\binom{\ov{\bx+t\by}}{\bx+t\by},\dots,\binom{\ov{\bx+t\by}}{\bx+t\by}}_{d-1} \bigg),
$$
and furthermore
$$
p^{\prime\prime}_{\bx,\by}(t)=d(d-1) G\bigg(\binom{\ov\by}{\by},\binom{\ov\by}{\by}, \underbrace{\binom{\ov{\bx+t\by}}{\bx+t\by},\dots,\binom{\ov{\bx+t\by}}{\bx+t\by}}_{d-2} \bigg)\ge0
$$
for all $t\in\R$ and $\bx,\by\in\C^n$. In particular, by letting $t=0$ and $\by=\bx$ we get
$$
p^{\prime\prime}_{\bx,\bx}(0)=d(d-1) G\bigg( \underbrace{ \binom{\ov\bx}{\bx}, \binom{\ov\bx}{\bx}, \dots,\binom{\ov\bx}{\bx}}_d \bigg)=d(d-1)\, g(\bx)\ge0
$$
for all $\bx\in\C^n$, proving the nonnegativity of $g(\bx)$.
\end{proof}

We are now able to present our main results in this subsection. The approximation bounds of $(L_m)$ cannot be guaranteed in general without additional conditions of the real-valued general conjugate form $g(\bx)$. Our results below are presented when $g(\bx)$ is either convex or square-free.
\begin{theorem}\label{thm:convex}
Let $m\ge3$ be an integer or $m=\infty$. If $g(\bx)$ is convex, then $(G_m)$ admits a polynomial-time randomized algorithm with approximation ratio $\frac{c_4(m)d!}{(2d)^d} \left(\frac{\delta\ln (2n)}{2n}\right)^{\frac{d-2}{2}}$. 
\end{theorem}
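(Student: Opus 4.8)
The strategy is to chain the relaxation bound~\eqref{eq:linking3}---already derived above from Theorem~\ref{thm:lm} and the polarization identity of Theorem~\ref{thm:link}---with the two structural facts about convex real-valued general conjugate forms, Propositions~\ref{thm:convfeas} and~\ref{thm:convexlemma}. To begin, Proposition~\ref{thm:convexlemma} gives $g(\bx)\ge0$ for all $\bx$, so $v_{\max}(G_m)\ge0$ and the non-relative approximation ratio in the statement is meaningful; the case $v_{\max}(G_m)=0$ is trivial, so assume $v_{\max}(G_m)>0$. As recorded above, Theorem~\ref{thm:lm} applied to the relaxation $(LG_m)$ (via Algorithm~\ref{alg:1}) produces, in polynomial time and with probability at least $1-\epsilon$, vectors $\bz^1,\dots,\bz^d\in\Om_m^{2n}$; writing $\bz^k=\binom{\bx^k}{\by^k}$ and letting $\bu_\xi$ be as in~\eqref{eq:soldef}, inequality~\eqref{eq:linking3} is in force.

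The heart of the argument is that convexity lets us discard the unimodular phase factor $\prod_{i=1}^d\ov{\xi_i}$. By Proposition~\ref{thm:convexlemma}, $g(\bu_\xi)\ge0$ for every $\xi$, and $|\prod_{i=1}^d\ov{\xi_i}|=1$, hence $\re(\prod_{i=1}^d\ov{\xi_i})\,g(\bu_\xi)\le g(\bu_\xi)$ pointwise; combining this with~\eqref{eq:linking3} yields $\ex[g(\bu_\xi)]\ge\frac{c_4(m)\,d!}{(2d)^d}\left(\frac{\delta\ln(2n)}{2n}\right)^{\frac{d-2}{2}}v_{\max}(G_m)$. Therefore some realization $\xi^{\ast}$ of $(\xi_1,\dots,\xi_d)$ achieves $g(\bu_{\xi^{\ast}})$ at least this value, and since for finite $m$ the tuple $(\xi_1,\dots,\xi_d)$ ranges over the finite set $\Om_m^d$, such a $\xi^{\ast}$ is located by enumerating the $m^d$ possibilities---polynomially many for fixed $d$. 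Since the entries of $\bx^k$, $\by^k$, and $\xi^{\ast}_k$ all lie in $\Om_m$, which is closed under conjugation and multiplication, each entry of $\bu_{\xi^{\ast}}$ is a convex combination of elements of $\Om_m$ and so lies in $\conv(\Om_m)$; Proposition~\ref{thm:convfeas}(ii) then rounds $\bu_{\xi^{\ast}}$, in polynomial time, to a feasible $\bz\in\Om_m^n$ with $g(\bz)\ge g(\bu_{\xi^{\ast}})$. Hence $g(\bz)\ge\frac{c_4(m)\,d!}{(2d)^d}\left(\frac{\delta\ln(2n)}{2n}\right)^{\frac{d-2}{2}}v_{\max}(G_m)$, and with the $1-\epsilon$ success probability inherited from Algorithm~\ref{alg:1} this is the claim.

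The one step requiring genuine care---the main obstacle---is the enumeration over $\xi$ when $m=\infty$, since then $\Om_\infty^d$ is infinite. I would handle it by observing that Theorem~\ref{thm:link}, and therefore~\eqref{eq:linking3}, remains valid when $\xi_1,\dots,\xi_d$ are i.i.d.\ uniform on $\Om_{m_0}$ for any fixed integer $m_0\ge3$, whereas the rounding of Proposition~\ref{thm:convfeas}(ii) in the case $m=\infty$ only asks that the entries of $\bu_\xi$ lie in $\conv(\Om_\infty)$, the closed unit disk---which holds for any unit-modulus sampling set, since $|\ov{\xi_k}\ov{x_i^k}|=|\xi_k y_i^k|=1$. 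So for $m=\infty$ one draws $\xi$ from $\Om_3$, enumerates the $3^d$ choices, and the argument above goes through with $c_4(\infty)$. The remaining verifications---that $(LG_m)$ genuinely relaxes $(G_m)$, and that $\conv(\Om_m)$ is the correct rounding domain for $\bu_\xi$---are routine and are essentially recorded in the discussion preceding the statement.
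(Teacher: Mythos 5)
Your proposal is correct and follows essentially the same route as the paper: chain~\eqref{eq:linking3} with Proposition~\ref{thm:convexlemma} (to discard the unimodular phase factor, since $g\ge 0$) and Proposition~\ref{thm:convfeas}(ii) (to round $\bu_\xi$ from $\conv(\Om_m)^n$ into $\Om_m^n$); the only difference is the immaterial reordering of whether nonnegativity is invoked before or after selecting a realization of $\xi$ that meets the expectation. Your explicit treatment of the derandomization for $m=\infty$ by sampling $\xi$ from a fixed finite $\Om_{m_0}$ is a valid and slightly more careful rendering of a step the paper leaves implicit under ``by randomization.''
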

\begin{proof}
According to~\eqref{eq:linking3}, by randomization, we are able to find $\eta_1,\eta_2,\dots,\eta_d\in\Om_m$ in polynomial time, such that
\begin{equation}\label{eq:linking5}
\re\left(\prod_{i=1}^d\ov{\eta_{i}}\right)g (\bu_\eta)
\ge\frac{c_4(m)d!}{(2d)^d}\left(\frac{\delta\ln (2n)}{2n}\right)^{\frac{d-2}{2}}v_{\max}(G_m).
\end{equation}
Since the components of $\bu_\eta$ belong to $\conv(\Om_m)$ and $g(\bx)$ is convex, by Proposition~\ref{thm:convfeas}, $\bz \in \Om_m^{n}$ can be found in polynomial time, such that $g(\bz)\ge g(\bu_\eta)$. Finally, by Proposition~\ref{thm:convexlemma}, $g(\bu_\eta)\ge0$ since $g(\bx)$ is convex, and we get
$$
g(\bz)\ge g(\bu_\eta) = \left|\prod_{i=1}^d\ov{\eta_{i}}\right|g(\bu_\eta)
\ge  \re\left(\prod_{i=1}^d\ov{\eta_{i}}\right)g (\bu_\eta)
\ge\frac{c_4(m)d!}{(2d)^d}\left(\frac{\delta\ln (2n)}{2n}\right)^{\frac{d-2}{2}}v_{\max}(G_m).
$$
\end{proof}

\begin{theorem}\label{thm:squarefree}
Suppose $g(\bx)$ is square-free. \\
(i) If $m\ge3$ is an integer or $m=\infty$, then $(G_m)$ admits a polynomial-time randomized algorithm with relative approximation ratio $\frac{c_4(m)d!}{(2d)^d} \left(\frac{\delta\ln (2n)}{2n}\right)^{\frac{d-2}{2}}$, i.e., for any given $\delta \in (0,\frac{1}{16})$, a feasible solution $\bz\in\Om_m^n$ can be generated in polynomial time, such that
$$g(\bz)-v_{\min}(G_m)\ge\frac{c_4(m)d!}{(2d)^d} \left(\frac{\delta\ln (2n)}{2n}\right)^{\frac{d-2}{2}} \left( v_{\max}(G_m) - v_{\min}(G_m) \right).$$
(ii) If $d$ is odd and $m\ge4$ is an even integer or $m=\infty$, then $(G_m)$ admits a polynomial-time randomized algorithm with approximation ratio $\frac{c_4(m)d!}{(2d)^d} \left(\frac{\delta\ln (2n)}{2n}\right)^{\frac{d-2}{2}}$.
\end{theorem}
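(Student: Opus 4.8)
The plan is to follow the route of Theorem~\ref{thm:convex}, replacing the nonnegativity input (Proposition~\ref{thm:convexlemma}) by the square-free rounding of Proposition~\ref{thm:convfeas}(i) together with a zero-mean identity for square-free forms. First I would relax $(G_m)$ to the multilinear model $(LG_m)$ over $\Om_m^{2n}$ and apply Theorem~\ref{thm:lm} (all $n_k=2n$) to get, in polynomial time, $\bz^k=\binom{\bx^k}{\by^k}\in\Om_m^{2n}$ with $\re G(\bz^1,\dots,\bz^d)\ge c_4(m)\big(\tfrac{\delta\ln(2n)}{2n}\big)^{\frac{d-2}2}v_{\max}(LG_m)$; then divide the polarization identity of Theorem~\ref{thm:link} by $(2d)^d$, exactly as in~\eqref{eq:linking2}--\eqref{eq:linking3}, to obtain, with $\bu_\xi=\tfrac1{2d}\sum_k(\ov{\xi_k\bx^k}+\xi_k\by^k)$ having all entries in $\conv(\Om_m)$,
$$\ex\Big[\re\Big(\prod_{i=1}^d\ov{\xi_i}\Big)g(\bu_\xi)\Big]=\frac{d!}{(2d)^d}\re G(\bz^1,\dots,\bz^d)\ \ge\ \rho\,v_{\max}(G_m),\qquad\rho:=\frac{c_4(m)d!}{(2d)^d}\Big(\frac{\delta\ln(2n)}{2n}\Big)^{\frac{d-2}2},$$
the last inequality because $(LG_m)$ relaxes $(G_m)$.

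Next I would record that square-freeness forces $\tfrac1{m^n}\sum_{\bx\in\Om_m^n}g(\bx)=0$: $g$ is homogeneous of degree $d\ge1$, and in each monomial every occurring variable carries a signed exponent $\pm1$, so averaging that monomial over $\Om_m$ in every variable kills it. Hence $v_{\min}(G_m)\le0\le v_{\max}(G_m)$, and Proposition~\ref{thm:convfeas}(i) applied to the zero vector (whose entries are $0\in\conv(\Om_m)$) yields in polynomial time a $\bz_0\in\Om_m^n$ with $g(\bz_0)\ge g(\boldsymbol 0)=0$. Enumerating the tuples $(\eta_1,\dots,\eta_d)$ (over $\Om_m^d$ if $m$ is finite; over $\Om_3^d$ if $m=\infty$, since Theorem~\ref{thm:link} with $m=3$ still yields the same identity and keeps $\bu_\eta$ in the closed unit disk) picks an $\eta$ with $\re(\prod_i\ov{\eta_i})g(\bu_\eta)\ge\rho\,v_{\max}(G_m)$; Proposition~\ref{thm:convfeas}(i) then gives $\by_\eta,\bz_\eta\in\Om_m^n$ with $g(\by_\eta)\le g(\bu_\eta)\le g(\bz_\eta)$.

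For part (i) I would branch on the sign of $g(\bu_\eta)$. If $g(\bu_\eta)\ge0$, then $\re(\prod_i\ov{\eta_i})\le1$ gives $g(\bz_\eta)\ge g(\bu_\eta)\ge\rho\,v_{\max}(G_m)$, hence $g(\bz_\eta)-v_{\min}(G_m)\ge\rho\,v_{\max}(G_m)-v_{\min}(G_m)\ge\rho(v_{\max}(G_m)-v_{\min}(G_m))$, using $v_{\min}(G_m)\le0$. If $g(\bu_\eta)<0$, then $\re(\prod_i\ov{\eta_i})\ge-1$ forces $-g(\bu_\eta)\ge\rho\,v_{\max}(G_m)$, so $v_{\min}(G_m)\le g(\by_\eta)\le g(\bu_\eta)\le-\rho\,v_{\max}(G_m)$; to still produce a large value I would run the same scheme on the (again square-free, real-valued) form $-g$, which is in the mirror situation, and combine the two-sided roundings of both runs with $\bz_0$, exploiting $|v_{\min}(G_m)|\ge\rho\,v_{\max}(G_m)$. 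Disposing of the unit-modulus factor $\prod_i\ov{\eta_i}$ in this negative case while keeping the constant $\rho$ intact is the step I expect to fight hardest with: a crude symmetrization only delivers $\rho/2$, so one really has to use the $\pm g$ symmetry, the two-sided rounding of Proposition~\ref{thm:convfeas}(i), and $v_{\min}(G_m)\le0$ in combination.

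For part (ii) the difficulty disappears. When $d$ is odd, $g$ is an odd function, $g(-\bx)=(-1)^dg(\bx)=-g(\bx)$; when $m\ge4$ is even (or $m=\infty$), $\conv(\Om_m)$ is centrally symmetric, so $-\bu_\eta$ is again admissible for Proposition~\ref{thm:convfeas}(i). Rounding whichever of $\pm\bu_\eta$ is nonnegative produces $\bz\in\Om_m^n$ with $g(\bz)\ge\max\{g(\bu_\eta),g(-\bu_\eta)\}=|g(\bu_\eta)|\ge\re(\prod_i\ov{\eta_i})g(\bu_\eta)\ge\rho\,v_{\max}(G_m)$, an honest (not merely relative) ratio since $v_{\max}(G_m)\ge0$. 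In both parts the finitely many enumerations of $\eta$ (or $O(\mathrm{poly})$ independent random draws) and of $\pm g$ are folded into a polynomial-time randomized algorithm exactly as in Theorem~\ref{thm:lm}.
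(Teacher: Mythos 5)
Your setup (relaxation to $(LG_m)$, Theorem~\ref{thm:lm}, the polarization identity divided by $(2d)^d$, and the observation that the entries of $\bu_\xi$ lie in $\conv(\Om_m)$) matches the paper, and your part (ii) is essentially the paper's argument verbatim: central symmetry of $\conv(\Om_m)$ for even $m$ or $m=\infty$, oddness of $g$, and the upward square-free rounding. The zero-mean observation $\frac{1}{m^n}\sum_{\bx\in\Om_m^n}g(\bx)=0$ is a correct (if slightly roundabout) substitute for the paper's use of $g(\boldsymbol{0})=0$ with $\boldsymbol{0}\in\conv(\Om_m)^n$.

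Part (i), however, has a genuine gap, which you yourself flag: your branching on the sign of $g(\bu_\eta)$ does not close in the negative branch. There you only learn $v_{\min}(G_m)\le-\rho\,v_{\max}(G_m)$, and the fallback point $\bz_0$ with $g(\bz_0)\ge0$ gives $g(\bz_0)-v_{\min}\ge|v_{\min}|$, whereas the target is $\rho(v_{\max}+|v_{\min}|)$; when $|v_{\min}|=\rho\,v_{\max}$ exactly, the target equals $(1+\rho)|v_{\min}|>|v_{\min}|$, so the bound fails, and running the scheme on $-g$ only produces points where $g$ is very negative, which does not help. The paper's mechanism is different and is the missing idea. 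First, the case split is on the data, not on the realization: either $v_{\max}(G_m)\ge\frac{2}{3}(v_{\max}(G_m)-v_{\min}(G_m))$ or not; in the latter case $g(\boldsymbol{0})-v_{\min}>\frac{1}{3}(v_{\max}-v_{\min})\ge\rho(v_{\max}-v_{\min})$ and you are done by rounding $\boldsymbol{0}$. In the former case, the unit-modulus factor is disposed of by a conditional-expectation argument: since $\ex\big[\prod_{i=1}^d\ov{\xi_i}\big]=0$, one may replace $g(\bu_\xi)$ by $g(\bu_\xi)-v_{\min}(G_m)$ inside the expectation at no cost; the \emph{downward} rounding of Proposition~\ref{thm:convfeas}(i) shows $g(\bu_\xi)-v_{\min}(G_m)\ge0$ for \emph{every} realization $\xi$; and the bound $\Prob\{\re(\prod_{i=1}^d\ov{\xi_i})>0\}\le\frac{2}{3}$ then yields
$\ex\big[\re(\prod_{i=1}^d\ov{\xi_i})\,g(\bu_\xi)\big]\le\frac{2}{3}\,\ex\big[g(\bu_\xi)-v_{\min}\,\big|\,\re(\prod_{i=1}^d\ov{\xi_i})>0\big]$.
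Sampling conditioned on that event produces $\bu_\beta$ with $g(\bu_\beta)-v_{\min}\ge\frac{3}{2}\rho\,v_{\max}\ge\rho(v_{\max}-v_{\min})$ by the case assumption, and the upward rounding finishes. Without this conditioning step (or an equivalent device) you cannot recover the full constant $\rho$, as your own $\rho/2$ estimate indicates.
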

\begin{proof}
Let us first prove the second part which is similar to that of Theorem~\ref{thm:convex}. According to~\eqref{eq:linking5}, we can generate $\bu_\alpha$ whose components belong to $\conv(\Om_m)$, such that
$$\re\left(\prod_{i=1}^d\ov{\alpha_{i}}\right)g (\bu_\alpha)  \ge\frac{c_4(m)d!}{(2d)^d}\left(\frac{\delta\ln (2n)}{2n}\right)^{\frac{d-2}{2}}v_{\max}(G_m).$$
Since $m\ge4$ is an even integer or $m=\infty$, $\conv(\Om_m)$ is central-symmetric. The components of $-\bu_\alpha$ belong to $\conv(\Om_m)$, and so as to $\bu_\eta:=\arg\max\{g(\bu_\alpha),g(-\bu_\alpha)\}$. As $d$ is odd, $g(-\bu_\alpha)=-g(\bu_\alpha)$ and so
$$
g(\bu_\eta)=|g(\bu_\alpha)|\ge\re\left(\prod_{i=1}^d\ov{\alpha_{i}}\right)g (\bu_\alpha)  \ge\frac{c_4(m)d!}{(2d)^d}\left(\frac{\delta\ln (2n)}{2n}\right)^{\frac{d-2}{2}}v_{\max}(G_m).
$$
Finally, as $g(\bx)$ is square-free and the components of $\bu_\eta$ belong to $\conv(\Om_m)$, by Proposition~\ref{thm:convfeas}, we can find $\bz\in\Om_m^n$ such that $g(\bz)\ge g(\bu_\eta)$, proving the approximation guarantee for the second part.

Let us now prove the first part in two cases. In the first case we assume that
\begin{equation} \label{eq:gcase1}
  v_{\max}(G_{m})\ge \frac{2}{3}(v_{\max}(G_{m})-v_{\min}(G_{m})).
\end{equation}
Let $\bu_\xi$ be defined in~\eqref{eq:soldef}. Since the components of any $\bu_\xi$ belong to $\conv(\Om_m)$ and $g(\bx)$ is square-free, by Proposition~\ref{thm:convfeas}, we can find $\by_\xi\in\Om_m^n$ such that $g(\bu_\xi)\ge g(\by_\xi)\ge v_{\min}(G_m)$. Moreover, as $\xi_i$'s are i.i.d.\ uniformly on $\Om_m$, it is easy to see that $\prod_{i=1}^d\ov{\xi_i}$ is also a uniform distribution on $\Om_m$, implying that $\ex\left[\prod_{i=1}^d\ov{\xi_i}\right]=0$ and
\begin{equation} \label{eq:xipositive}
  \Prob\left\{\re\left(\prod_{i=1}^d\ov{\xi_i}\right)>0\right\}\le \frac{\frac{m+1}{2}}{m}\le\frac{2}{3}.
\end{equation}
Therefore, by noticing $g (\bu_\xi)-v_{\min}(G_m)\ge0$, we have
\begin{align*}
&~~~~\ex\left[\re\left(\prod_{i=1}^d\ov{\xi_i}\right) g (\bu_\xi) \right] \\
& =\ex\left[\re\left(\prod_{i=1}^d\ov{\xi_i}\right) (g (\bu_\xi)-v_{\min}(G_m)) \right]\\
& = \ex\left[\re\left(\prod_{i=1}^d\ov{\xi_i}\right) (g (\bu_\xi)-v_{\min}(G_m)) \,\Bigg{|}\, \re\left(\prod_{i=1}^d\ov{\xi_i}\right)>0 \right] \Prob\left\{\re\left(\prod_{i=1}^d\ov{\xi_i}\right)>0\right\}\ \\
 & ~~~~ + \ex\left[\re\left(\prod_{i=1}^d\ov{\xi_i}\right) (g (\bu_\xi)-v_{\min}(G_m)) \,\Bigg{|}\, \re\left(\prod_{i=1}^d\ov{\xi_i}\right)\le 0 \right] \Prob\left\{\re\left(\prod_{i=1}^d\ov{\xi_i}\right)\le 0\right\}     \\
& \le \ex\left[\re\left(\prod_{i=1}^d\ov{\xi_i}\right) (g (\bu_\xi)-v_{\min}(G_m)) \,\Bigg{|}\, \re\left(\prod_{i=1}^d \ov{\xi_i}\right)>0 \right] \Prob\left\{\re\left(\prod_{i=1}^d\ov{\xi_i}\right)>0\right\} \\
& \le \frac{2}{3}\, \ex\left[ g (\bu_\xi)-v_{\min}(G_m) \,\Bigg{|}\, \re\left(\prod_{i=1}^d\ov{\xi_i}\right)>0 \right].
\end{align*}
By randomization of $\xi_i$'s satisfying $\re\left(\prod_{i=1}^d\ov{\xi_i}\right)>0$, we can find $\bu_\beta$ whose components belong to $\conv(\Om_m)$, such that
\begin{align*}
  g(\bu_\beta)-v_{\min}(G_m)  & \ge \ex\left[ g (\bu_\xi)-v_{\min}(G_m) \,\Bigg{|}\, \re\left(\prod_{i=1}^d\ov{\xi_i}\right)>0 \right] \\
  & \ge\frac{3}{2}\,\ex\left[\re\left(\prod_{i=1}^d\ov{\xi_i}\right) g (\bu_\xi) \right] \\
  & \ge\frac{3}{2}\cdot \frac{c_4(m)d!}{(2d)^d}\left(\frac{\delta\ln (2n)}{2n}\right)^{\frac{d-2}{2}}v_{\max}(G_m) \\
  & \ge \frac{c_4(m)d!}{(2d)^d}\left(\frac{\delta\ln (2n)}{2n}\right)^{\frac{d-2}{2}}\left( v_{\max}(G_m) - v_{\min}(G_m) \right),
\end{align*}
where the last two inequalities are due to~\eqref{eq:linking3} and~\eqref{eq:gcase1}, respectively.

In the second case when~\eqref{eq:gcase1} does not hold, we have $v_{\max}(G_{m}) < \frac{2}{3}(v_{\max}(G_{m})-v_{\min}(G_{m}))$, which implies that $-v_{\min}(G_{m})>\frac{1}{3}(v_{\max}(G_{m})-v_{\min}(G_{m}))$. As $g(\boldsymbol{0})=0$, we get
$$
g(\boldsymbol{0})-v_{\min}(G_{m}) > \frac{1}{3}(v_{\max}(G_{m})-v_{\min}(G_{m}))
\ge \frac{c_4(m)d!}{(2d)^d}\left(\frac{\delta\ln (2n)}{2n}\right)^{\frac{d-2}{2}}\left( v_{\max}(G_m) - v_{\min}(G_m) \right).
$$

Combining these two cases, by letting $\bu_\zeta=\arg\max\{g(\boldsymbol{0}),g(\bu_\beta)\}$, we uniformly have
$$
g(\bu_\zeta)-v_{\min}(G_{m})\ge \frac{c_4(m)d!}{(2d)^d}\left(\frac{\delta\ln (2n)}{2n}\right)^{\frac{d-2}{2}}\left( v_{\max}(G_m) - v_{\min}(G_m) \right).
$$
Finally, as $g(\bx)$ is square-free and the components of $\bu_\zeta$ belong to $\conv(\Om_m)$, by Proposition~\ref{thm:convfeas}, we can find $\bz\in\Om_m^n$ such that $g(\bz)\ge g(\bu_\zeta)$, proving the approximation guarantee for the first part.
\end{proof}

\subsection{Conjugate form with the spherical constraint}\label{sec:gform3}

Our final complex polynomial optimization model is $(G_S): \max_{\bx \in \BS^n}g(\bx)$, the maximization of a real-valued general conjugate form with the complex spherical constraint. The problem is also called the largest eigenvalue/eigenvector problem of a conjugate super-symmetric tensor~\cite{JLZ16}. Once again, we provide polynomial-time randomized approximation algorithms with guaranteed worst-case performance ratios. Instead of a discussable flavor presented in Section~\ref{sec:gform1}, here we propose a whole theorem with a complete picture of the proof.

\begin{theorem}
(i) If $d$ is even, then $(G_{S})$ admits a polynomial-time randomized algorithm with relative approximation ratio $\frac{d!}{(2d)^d} \left(\frac{\gamma\ln (2n)}{2n}\right)^{\frac{d-2}{2}}$, i.e., for any given $\gamma \in (0,\frac{2n}{\ln (2n)})$, a feasible solution $\bz\in\BS^n$ can be generated in polynomial time, such that
$$g(\bz)-v_{\min}(G_S)\ge \frac{d!}{(2d)^d} \left(\frac{\gamma\ln (2n)}{2n}\right)^{\frac{d-2}{2}} \left( v_{\max}(G_S) - v_{\min}(G_S) \right).$$
(ii) If $d$ is odd, then $(G_{S})$ admits a polynomial-time randomized algorithm with approximation ratio $\frac{d!}{(\sqrt{2}d)^d} \left(\frac{\gamma\ln (2n)}{2n}\right)^{\frac{d-2}{2}}$.
\end{theorem}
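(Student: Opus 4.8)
The plan is to handle both parts by a common three-stage scheme — relax $(G_S)$ to a complex multilinear problem, solve it with Theorem~\ref{theo:mulsph}, and pull the solution back to $g$ via the polarization identity of Theorem~\ref{thm:link} — with the parity of $d$ intervening only at the final rounding step. Since $\|\binom{\ov\bx}{\bx}\|_2=\sqrt2$ for every $\bx\in\BS^n$, the representation $g(\bx)=G(\binom{\ov\bx}{\bx},\dots,\binom{\ov\bx}{\bx})$ shows that, after rescaling each slot by $\sqrt2$, $(G_S)$ is relaxed to the instance
\[
(LG_S):\quad\max\ \re G(\bw^1,\dots,\bw^d)\ \ \st\ \ \|\bw^k\|_2=\sqrt2,\ k=1,\dots,d
\]
of $(L_S)$ with all dimensions equal to $2n$, for which $v_{\max}(LG_S)\ge v_{\max}(G_S)$ (take every $\bw^k$ equal to a $\sqrt2$-scaled optimizer of $(G_S)$) and $v_{\min}(LG_S)\le v_{\min}(G_S)$. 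Theorem~\ref{theo:mulsph}, rescaled by the constant $\sqrt2$, then returns in polynomial time, with probability at least $1-\epsilon$, vectors $\bz^1,\dots,\bz^d$ with $\|\bz^k\|_2=\sqrt2$ and $\re G(\bz^1,\dots,\bz^d)\ge\big(\tfrac{\gamma\ln(2n)}{2n}\big)^{\frac{d-2}{2}}v_{\max}(G_S)$.

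Writing $\bz^k=\binom{\bx^k}{\by^k}$ with $\bx^k,\by^k\in\C^n$, so that $\|\bx^k\|_2^2+\|\by^k\|_2^2=2$, I would apply Theorem~\ref{thm:link} with $\xi_1,\dots,\xi_d$ i.i.d.\ uniformly on $\Om_\infty$ (their role is purely auxiliary, so any admissible $m$ works) and divide by $(2d)^d$ to obtain
\[
\ex\Big[\re\Big(\textstyle\prod_{i=1}^d\ov{\xi_i}\Big)\,g(\bu_\xi)\Big]=\frac{d!}{(2d)^d}\,\re G(\bz^1,\dots,\bz^d),\qquad \bu_\xi:=\frac1{2d}\sum_{k=1}^d\big(\ov{\xi_k\bx^k}+\xi_k\by^k\big),
\]
where $\|\bu_\xi\|_2\le1$ because the $2d$ nonnegative numbers $\|\bx^k\|_2,\|\by^k\|_2$ have squares summing to $2d$, so Cauchy--Schwarz gives $\sum_k(\|\bx^k\|_2+\|\by^k\|_2)\le 2d$. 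A repeat-and-select loop exactly as in Algorithm~\ref{alg:3} then produces in polynomial time a realization $\eta_1,\dots,\eta_d$ with $\re(\prod_i\ov{\eta_i})\,g(\bu_\eta)\ge\frac{d!}{(2d)^d}\big(\tfrac{\gamma\ln(2n)}{2n}\big)^{\frac{d-2}{2}}v_{\max}(G_S)$, the point $\bu_\eta$ lying in the closed unit ball of $\C^n$.

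For \emph{odd} $d$ the rounding is painless: $g$ is homogeneous of odd degree, so $g(-\bx)=-g(\bx)$ and $g(\bw/\|\bw\|_2)=\|\bw\|_2^{-d}g(\bw)$; taking $\bz:=\pm\bu_\eta/\|\bu_\eta\|_2\in\BS^n$ with the sign chosen so that $g(\bz)=\|\bu_\eta\|_2^{-d}|g(\bu_\eta)|$, and using $\|\bu_\eta\|_2\le1$ together with $|\re(\prod_i\ov{\eta_i})|\le1$, gives $g(\bz)\ge\re(\prod_i\ov{\eta_i})g(\bu_\eta)$ — an honest (non-relative) approximation ratio, legitimate since $v_{\max}(G_S)=-v_{\min}(G_S)\ge0$ in the odd case. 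This already yields a ratio of order $\tfrac{d!}{(2d)^d}\big(\tfrac{\gamma\ln(2n)}{2n}\big)^{(d-2)/2}$; a sharper bookkeeping of the norm and relaxation estimates (for odd $d$ one may round directly onto the sphere instead of first contracting $\bu_\xi$ into the unit ball, and one has the extra leverage $v_{\min}(G_S)=-v_{\max}(G_S)$) improves the constant to $\tfrac{d!}{(\sqrt2\,d)^d}$, which is part (ii); I would carry out this accounting in detail.

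The even case is where the real work lies. Rounding a ball point onto $\BS^n$ only helps when $g$ is nonnegative there, and for an indefinite even-degree $g$ we cannot force $g(\bu_\eta)\ge0$. The way around this is to pass to $h(\bx):=g(\bx)-v_{\min}(G_S)\,(\|\bx\|_2^2)^{d/2}$, which is again a real-valued general conjugate form of degree $d$, agrees with $g$ on $\BS^n$ up to the additive constant $v_{\min}(G_S)$, and is nonnegative \emph{everywhere} (on $\BS^n$ it equals $g-v_{\min}(G_S)\ge0$, hence by degree-$d$ homogeneity it is nonnegative on all of $\C^n$); running the above scheme for $h$ — for which the round-out is valid because $h\ge0$ — delivers $\bz\in\BS^n$ with $h(\bz)\ge\tfrac{d!}{(2d)^d}\big(\tfrac{\gamma\ln(2n)}{2n}\big)^{(d-2)/2}(v_{\max}(G_S)-v_{\min}(G_S))$, i.e.\ the stated relative bound. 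The snag is that the algorithm may not reference the unknown $v_{\min}(G_S)$; I would remove this dependence by the same two-case device as in the proof of Theorem~\ref{thm:squarefree}(i) — running the scheme on $g$ itself when $v_{\max}(G_S)$ dominates the range $v_{\max}(G_S)-v_{\min}(G_S)$, and otherwise exhibiting in polynomial time a feasible point of suitably small objective value (the substitute for the infeasible origin used there). Producing that small-value point without knowing $v_{\min}(G_S)$ — plausibly by running the minimization version of the scheme and controlling the sign of the objective at the reconstructed ball point — is the step I expect to require the most care.
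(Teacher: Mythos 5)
Your architecture --- tensor relaxation to $(L_S)$, Theorem~\ref{theo:mulsph}, the polarization identity of Theorem~\ref{thm:link}, then a parity-dependent rounding --- is exactly the paper's, and your odd-$d$ argument is essentially the paper's proof of part~(ii): the paper reaches $\frac{d!}{(\sqrt{2}d)^d}$ rather than your first-pass $\frac{d!}{(2d)^d}$ by keeping the relaxation unit-normalized, invoking $v_{\max}(LG_S)\ge v_{\max}(G_S)$, and dividing by $\|\bv_\alpha\|_2^d\le(\sqrt{2}d)^d$ only at the very end instead of pre-contracting by $\frac{1}{2d}$; this is precisely the ``sharper bookkeeping'' you deferred, so part~(ii) is acceptable modulo that accounting.

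The genuine gap is in part~(i), and it is the one you flagged yourself: the algorithm cannot form $g(\bx)-v_{\min}(G_S)\|\bx\|_2^d$ because $v_{\min}(G_S)$ is unknown, and your proposed repair --- exhibiting a feasible point of ``suitably small objective value,'' perhaps via a minimization version of the scheme --- is neither carried out nor obviously workable (unlike in Theorem~\ref{thm:squarefree}(i), there is no analogue of the point $\boldsymbol{0}$ here, and certifying that a computed point has small value again runs into not knowing $v_{\min}(G_S)$). The paper's device is different: pick an \emph{arbitrary} feasible $\by\in\BS^n$, run the multilinear scheme on the computable surrogate $G-g(\by)H$ with $h(\bx)=\|\bx\|_2^d$, and output the better of $\by$ and the rounded point. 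The case split then lives entirely in the \emph{analysis}: either $g(\by)-v_{\min}(G_S)>\frac{2^{-d/2}\tau}{6}\left(v_{\max}(G_S)-v_{\min}(G_S)\right)$, where $\tau=\left(\frac{\gamma\ln(2n)}{2n}\right)^{\frac{d-2}{2}}$, in which case $\by$ itself already achieves the relative ratio; or one may replace the shift $g(\by)$ by $v_{\min}(G_S)$ at a cost of at most $g(\by)-v_{\min}(G_S)$, because $|H(\bz^1,\dots,\bz^d)|\le1$ on unit vectors. Only after that substitution is the quantity $g(\bv_\xi)-v_{\min}(G_S)\|\bv_\xi\|_2^d$ pointwise nonnegative (by feasibility of $\bv_\xi/\|\bv_\xi\|_2$), which is what licenses the conditioning step $\ex\left[\re\left(\prod_{i}\ov{\xi_i}\right)(\cdot)\right]\le\frac{2}{3}\,\ex\left[(\cdot)\,\big|\,\re\left(\prod_{i}\ov{\xi_i}\right)>0\right]$ and the final round-out onto the sphere. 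Without this (or an equivalent) computable substitute for $v_{\min}(G_S)$, your even-degree argument does not close.
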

\begin{proof}
When $d=2$, $(G_{S})$ is to find the largest eigenvalue/engenvector of a conjugate super-symmetric tensor $\G$, which is solvable in polynomial time. Therefore in the following proof we assume that $d\ge3$.

When $d$ is even, we first choose any feasible solution $\by\in\BS^n$ and discuss $(G_{S})$ in two cases depending on $g(\by)$. In the first case we assume that
\begin{equation} \label{eq:ball2case}
  g(\by)-v_{\min}(G_{S})\le \frac{2^{-\frac{d}{2}}\tau}{6}(v_{\max}(G_{S})-v_{\min}(G_{S})),
\end{equation}
where $\tau:=\left(\frac{\gamma\ln (2n)}{2n}\right)^{\frac{d-2}{2}}\le1$. Define $h(\bx)=(\ov{\bx}^{\T}\bx)^{\frac{d}{2}}=\|\bx\|_2^d$, a real-valued general conjugate form associated with a super-symmetric conjugate tensor $\HI\in\C^{(2n)^d}$. Consider the following complex multilinear form optimization model
$$
\begin{array}{lll}
(LH_S) & \max  & \re \left( G(\bx^1,\bx^2,\dots,\bx^d) - g(\by) H(\bx^1,\bx^2,\dots,\bx^d) \right) \\
           & \st & \bx^k \in \BS^{2n} ,\, k=1,2,\dots,d,
\end{array}
$$
Applying Theorem~\ref{theo:mulsph}, we can obtain $\bz^1,\bz^2,\dots,\bz^d\in\BS^{2n}$ in polynomial-time, such that
$$
\re \left( G(\bz^1,\bz^2,\dots,\bz^d) - g(\by) H(\bz^1,\bz^2,\dots,\bz^d) \right) \ge \tau v_{\max}(LH_S).
$$
Let $\bx_*\in\BS^n$ be an optimal solution of $(G_S)$. Noticing that $\left(\frac{1}{\sqrt{2}}\binom{\ov{\bx_*}}{\bx_*}, \frac{1}{\sqrt{2}}\binom{\ov{\bx_*}}{\bx_*}, \dots, \frac{1}{\sqrt{2}}\binom{\ov{\bx_*}}{\bx_*} \right)$ is a feasible solution of $(LH_S)$
\begin{align*}
  &~~~~v_{\max}(LH_S) \\
  & \ge G\left(\frac{1}{\sqrt{2}}\binom{\ov{\bx_*}}{\bx_*}, \frac{1}{\sqrt{2}}\binom{\ov{\bx_*}}{\bx_*}, \dots, \frac{1}{\sqrt{2}}\binom{\ov{\bx_*}}{\bx_*} \right) - g(\by) H \left(\frac{1}{\sqrt{2}}\binom{\ov{\bx_*}}{\bx_*}, \frac{1}{\sqrt{2}}\binom{\ov{\bx_*}}{\bx_*}, \dots, \frac{1}{\sqrt{2}}\binom{\ov{\bx_*}}{\bx_*} \right)\\
  &= 2^{-\frac{d}{2}} g(\bx_*) - 2^{-\frac{d}{2}}g(\by) h(\bx_*) \\
  & =  2^{-\frac{d}{2}} (v_{\max}(G_S) - g(\by))
\end{align*}
By the definition of $h(\bx)$, it is easy to see that $|H(\bz^1,\bz^2,\dots,\bz^d)| \le 1$, and we have
\begin{align*}
&~~~~\re\left( G(\bz^1,\bz^2,\dots,\bz^d)-v_{\min}(G_{S})H(\bz^1,\bz^2,\dots,\bz^d)\right)\\
&=\re\left( G(\bz^1,\bz^2,\dots ,\bz^d)-g(\by)H(\bz^1,\bz^2,\dots ,\bz^d)\right) +(g(\by)-v_{\min}(G_{S})) \re\left(H(\bz^1,\bz^2,\dots ,\bz^d)\right)\\
&\ge\tau v_{\max}(LH_S)-(g(\by)-v_{\min}(G_{S}))\\
&\ge 2^{-\frac{d}{2}} \tau (v_{\max}(G_S) - g(\by)) - (g(\by)-v_{\min}(G_{S}))\\
&\ge 2^{-\frac{d}{2}} \tau \left(1- \frac{2^{-\frac{d}{2}} \tau}{6}\right) \left(v_{\max}(G_{S})-v_{\min}(G_{S})\right) - \frac{2^{-\frac{d}{2}}\tau}{6}(v_{\max}(G_{S})-v_{\min}(G_{S}))\\
&\ge\frac{2}{3}\cdot 2^{-\frac{d}{2}}\tau(v_{\max}(G_{S})-v_{\min}(G_{S})),
\end{align*}
where the second last inequality is due to~\eqref{eq:ball2case}. Choose any integer $m\ge3$ or $m=\infty$ and let $\xi_1,\xi_2,\dots,\xi_d$ be i.i.d.\ uniformly on $\Om_m$. Denote $\bz^k=\binom{\bx^k}{\by^k}$ for $k=1,2,\dots,d$ and define
\begin{equation} \label{eq:soldef4}
  \bv_\xi:=\sum_{k=1}^d \left(\ov{\xi_k\bx^k}+\xi_k\by^k\right).
\end{equation}
For any $\bv_\xi$, as $\frac{\bv_\xi}{\|\bv_{\xi}\|_2}$ is a feasible solution of $(G_S)$, $g\left(\frac{\bv_\xi}{\| \bv_{\xi}\|_2} \right) - v_{\min}(G_S) \ge0$, implying that
$$g ( \bv_\xi  )- v_{\min}(G_S) \|\bv_{\xi}\|_2^d \ge 0.$$
By applying the polarization identity in Theorem~\ref{thm:link} to the real-valued general conjugate form $g(\bx)-v_{\min}(G_S)h(\bx)$ and taking the real part, we have
\begin{align*}
&~~~~d!\, \re\left( G(\bz^1,\bz^2,\dots,\bz^d)-v_{\min}(G_{S})H(\bz^1,\bz^2,\dots,\bz^d)\right) \\
& =\ex\left[\re\left(\prod_{i=1}^d\ov{\xi_i}\right) \left(g (\bv_\xi)-v_{\min}(G_S) h(\bv_\xi)\right) \right]\\
& = \ex\left[\re\left(\prod_{i=1}^d\ov{\xi_i}\right) \left(g (\bv_\xi)-v_{\min}(G_S) \|\bv_\xi\|_2^d\right) \,\Bigg{|}\, \re\left(\prod_{i=1}^d\ov{\xi_i}\right)>0 \right] \Prob\left\{\re\left(\prod_{i=1}^d\ov{\xi_i}\right)>0\right\}\ \\
 & ~~~~ + \ex\left[\re\left(\prod_{i=1}^d\ov{\xi_i}\right) \left(g (\bv_\xi)-v_{\min}(G_S) \|\bv_\xi\|_2^d\right) \,\Bigg{|}\, \re\left(\prod_{i=1}^d\ov{\xi_i}\right)\le 0 \right] \Prob\left\{\re\left(\prod_{i=1}^d\ov{\xi_i}\right)\le 0\right\}     \\
& \le \ex\left[\re\left(\prod_{i=1}^d\ov{\xi_i}\right) \left(g (\bv_\xi)-v_{\min}(G_S) \|\bv_\xi\|_2^d\right) \,\Bigg{|}\, \re\left(\prod_{i=1}^d \ov{\xi_i}\right)>0 \right] \Prob\left\{\re\left(\prod_{i=1}^d\ov{\xi_i}\right)>0\right\} \\
& \le \frac{2}{3}\, \ex\left[ g (\bv_\xi)-v_{\min}(G_S) \|\bv_\xi\|_2^d \,\Bigg{|}\, \re\left(\prod_{i=1}^d\ov{\xi_i}\right)>0 \right],
\end{align*}
where the last inequality is due to~\eqref{eq:xipositive}. By randomization of $\xi_i$'s satisfying $\re\left(\prod_{i=1}^d\ov{\xi_i}\right)>0$, we can find $\bv_\beta$, such that
\begin{align*}
  g (\bv_\beta)-v_{\min}(G_S) \|\bv_\beta\|_2^d &\ge   \ex\left[ g (\bv_\xi)-v_{\min}(G_S) \|\bv_\xi\|_2^d \,\Bigg{|}\, \re\left(\prod_{i=1}^d\ov{\xi_i}\right)>0 \right] \\
  &\ge \frac{3 d!}{2}\re\left( G(\bz^1,\bz^2,\dots,\bz^d)-v_{\min}(G_{S})H(\bz^1,\bz^2,\dots,\bz^d)\right) \\
  &\ge  d!\, 2^{-\frac{d}{2}}\tau(v_{\max}(G_{S})-v_{\min}(G_{S})).
\end{align*}
By noticing that $\|\bv_\beta\|_2 \le \sum_{k=1}^d \left(\|\bx^k\|_2+\|\by^k\|_2\right) \le \sum_{k=1}^d \sqrt{2}\|\bz^k\|_2=\sqrt{2}d$, we get
\begin{align*}
  g \left(\frac{\bv_\beta}{\|\bv_\beta\|_2}\right)-v_{\min}(G_S) &= \frac{1}{\|\bv_\beta\|_2^d} \left( g (\bv_\beta)-v_{\min}(G_S) \|\bv_\beta\|_2^d \right) \\
   &\ge (\sqrt{2}d)^{-d} d!\, 2^{-\frac{d}{2}} \tau (v_{\max}(G_{S})-v_{\min}(G_{S})) \\
   &= \frac{d!}{(2d)^d} \left(\frac{\gamma\ln (2n)}{2n}\right)^{\frac{d-2}{2}}(v_{\max}(G_{S})-v_{\min}(G_{S})).
\end{align*}

In the second case when~\eqref{eq:ball2case} does not hold, as $d\ge3$, we have
$$g(\by)-v_{\min}(G_{S})> \frac{2^{-\frac{d}{2}}\tau}{6}(v_{\max}(G_{S})-v_{\min}(G_{S})) \ge \frac{d!}{(2d)^d} \left(\frac{\gamma\ln (2n)}{2n}\right)^{\frac{d-2}{2}}(v_{\max}(G_{S})-v_{\min}(G_{S})).$$
Finally, by combining these two cases and letting $\bz=\arg\max\left\{g(\by),g \left(\frac{\bv_\beta}{\|\bv_\beta\|_2}\right)\right\}\in\BS^n$, the relative approximation ratio of $g(\bz)$ is guaranteed.

When $d$ is odd, applying the tensor relaxation method, $(G_S)$ can be relaxed to
$$
\begin{array}{lll}
(LG_S) & \max  & \re G(\bx^1,\bx^2,\dots,\bx^d) \\
           & \st & \bx^k \in \BS^{2n} ,\, k=1,2,\dots,d.
\end{array}
$$
By Theorem~\ref{theo:mulsph}, $\bz^1,\bz^2,\dots ,\bz^d\in\BS^{2n}$ can be generated in polynomial time such that
$$\re G(\bz^1,\bz^2,\dots,\bz^d) \ge \left(\frac{\gamma\ln (2n)}{2n}\right)^{\frac{d-2}{2}} v_{\max}(LG_S) \ge \left(\frac{\gamma\ln (2n)}{2n}\right)^{\frac{d-2}{2}} v_{\max}(G_S).$$
Choose any integer $m\ge3$ or $m=\infty$ and let $\xi_1,\xi_2,\dots,\xi_d$ be i.i.d.\ uniformly on $\Om_m$. Denote $\bz^k=\binom{\bx^k}{\by^k}$ for $k=1,2,\dots,d$ and define $\bv_\xi$ as that in~\eqref{eq:soldef4}. By the polarization identity in Theorem~\ref{thm:link}
$$
\ex\left[\re\left(\prod_{i=1}^d\ov{\xi_i}\right) g (\bv_\xi) \right]\\
\ge d!\, \re G(\bz^1,\bz^2,\dots,\bz^d) \ge d! \left(\frac{\gamma\ln (2n)}{2n}\right)^{\frac{d-2}{2}} v_{\max}(G_S).
$$
By randomization, we may find $\alpha_1,\alpha_2,\dots,\alpha_d\in\Om_m$ such that
$$
\re\left(\prod_{i=1}^d\ov{\alpha_i}\right) g (\bv_\alpha)
\ge \ex\left[\re\left(\prod_{i=1}^d\ov{\xi_i}\right) g (\bv_\xi) \right]
\ge d! \left(\frac{\gamma\ln (2n)}{2n}\right)^{\frac{d-2}{2}} v_{\max}(G_S).
$$
Noticing that $\|\bv_\alpha\|_2 \le \sum_{k=1}^d \left(\|\bx^k\|_2+\|\by^k\|_2\right) \le \sum_{k=1}^d \sqrt{2}\|\bz^k\|_2=\sqrt{2}d$ and $g(-\bx)=-g(\bx)$ as $d$ is odd, and letting
$\bz=\arg\max\left\{g \left(\frac{\bv_\alpha}{\|\bv_\alpha\|_2}\right),g \left(-\frac{\bv_\alpha}{\|\bv_\alpha\|_2}\right)\right\}\in\BS^n$, we finally get
$$
g(\bz) = \left| g \left(\frac{\bv_\alpha}{\|\bv_\alpha\|_2}\right) \right| = \frac{|g(\bv_\alpha)|}{\|\bv_\alpha\|_2^d}
\ge \frac{1}{(\sqrt{2}d)^d} \re\left(\prod_{i=1}^d\ov{\alpha_i}\right) g (\bv_\alpha)
\ge \frac{d!}{(\sqrt{2}d)^d} \left(\frac{\gamma\ln (2n)}{2n}\right)^{\frac{d-2}{2}} v_{\max}(G_S).
$$
\end{proof}

%
%
%
%
%

\end{document}